\numberwithin{equation}{section}
\newtheorem{teo}{Theorem}[section]
\newtheorem{pro}[teo]{Proposition}
\theoremstyle{definition}
\newtheorem{defi}[teo]{Definition}
\newtheorem{lem}[teo]{Lemma}
\newtheorem{ejem}[teo]{Example}
\newtheorem{Notac}[teo]{Notation}
\newtheorem{rem}[teo]{Remark}
\newtheorem{coro}[teo]{Corollary}
\newcommand{\m}{{}^{-1}}
\newcommand{\af}{\alpha}
\newcommand{\ep}{\epsilon}
\newcommand{\E}{\mathcal{E}}
\newcommand{\Z}{\mathbb{Z}}
\newcommand{\K}{\mathbb{K}}
\newcommand{\C}{\mathcal{C}}
\newcommand{\ot}{\otimes}
\newcommand{\ind}{{\rm Ind}}
\newcommand{\coind}{{\rm Coind}}
\newcommand{\pa}{\partial}
\title{\textbf{On the structure of nearly epsilon and epsilon-strongly graded rings}}
\author{
  Luis Mart\'inez, H\'{e}ctor Pinedo and  Yerly Soler\\
   \small  Escuela de Matem\'{a}ticas\\
   \small Universidad Industrial de Santander\\
   \small Cra. 27 calle 9, Bucaramanga, Colombia\\
   \small  e-mail: luchomartinez9816@hotmail.com , hpinedot@uis.edu.co, yervane03@gmail.com\\
   }
   \date{\today}
\begin{document}

\maketitle
\begin{abstract}
In this work  we study  the classes of epsilon and nearly epsilon-strongly graded rings by a group $G$. In particular, we extend Dade's theorem to the realm of nearly epsilon-strongly graded rings.  Moreover, we introduce the category  SIM$S-$gr  of symmetrically graded modules and use it to present a new characterization of  strongly graded rings.  A functorial approach is used to obtain  a characterization of  epsilon-strongly graded rings.  Finally, we determine conditions for which an epsilon-strongly graded ring can be written as a direct sum of strongly graded rings and a trivially graded ring. 
\end{abstract}
\noindent
\textbf{2020 AMS Subject Classification:} Primary   16W50. Secondary  16S35, 16S88, 18A99.\\
\noindent
\textbf{Key Words:} group graded ring,  nearly epsilon-strongly graded ring,  epsilon-strongly graded ring, epsilon crossed product, Leavitt path algebra.

\section{Introduction} Let $G$ be a group with identity element $e,$ and  $S$ be an associative ring equipped with a non-zero multiplicative identity $1$.
 We say that $S$ is  {\it graded} by $G$ if  for each $g \in G$, there 
is an additive subgroup $S_g$ of $S$
such that $S = \bigoplus\limits_{g \in G} S_g$ and
$S_g S_h \subseteq S_{gh},$ where for  $X$ and $Y$  nonempty subsets of $S$ one writes  $XY$ for the set of  all finite sums of elements of the form $xy$ with $x \in X$ and $y \in Y.$  The ring $S$ is {\it strongly graded} if $S_gS_h=S_{gh},$ for all $g,h\in G.$ 
The class of group graded rings contains several important mathematical structures, such as
polynomial rings, skew and twisted group rings,  matrix rings, crossed products and partial versions of these (see  for instance  \cite{D} and \cite{nas04}). Therefore, a theory of group graded rings can be applied to the study of completely different types of constructions, this serves 
as unification of a multitude of known theorems concerning these. In particular, the foundation of the general theory of strongly graded rings was presented  by Dade in his fundamental paper \cite{D}. Since then, many structural properties of strongly graded rings have been established (see e.g. \cite{nas04}).

On the other hand, partial actions of groups have been introduced in the theory of operator algebras giving powerful tools of their study, and in  a pure algebraic context were first studied in \cite{DE}, later  the possibility to construct a crossed product based on a partial action suggested the idea of creating a corresponding Galois Theory \cite{DFP}.  Crossed products related to partial actions are graded rings which are not necessarily strong, but belong to a more general class, the so called {\it epsilon-strongly graded rings}  (see Definition \ref{definitionepsilon}), this class was  recently  introduced in \cite{NYOP} and has being a subject of increasingly study (see \cite{L, L1, L2,  NYO, NYOP, NyOP3}).  Relevant families of rings which can be endowed with an  epsilon-strong gradation include  Morita rings, Leavitt Path Algebras associated to finite graphs, crossed product by partial actions  and corner skew polynomial rings. An important  class in the study of epsilon-strongly graded ring is the class of  {\it nearly epsilon-strongly graded rings}, which was  introduced in \cite{NYO} with the purpose to  study Leavitt path algebras associated  to arbitrary graphs. The classes of epsilon and nearly epsilon-strongly graded rings  will be our  principal object of study.

This work is structured as follows. After the introduction we give in Chapter \ref{pre} some preliminary notions that will be helpful in the work, later  in Chapter \ref{cate} we treat epsilon and   nearly epsilon-strongly graded rings from a categorical point of view, that is given a  graded ring $S$ with principal component $R,$ we present in Theorem \ref{teo1.1} a generalization of the classical Dade's theorem for strongly graded rings to the frame of nearly  epsilon-strongly graded rings. Further, Proposition \ref{indcoind} gives us some new characterization of the fact that $S$ is epsilon-strong by relating the categories $S$\text{-}gr  and  $R$\text{-}mod,  and we use this to give some properties preserved by the functors $\ind$ and $\coind.$   
In Chapter \ref{epfort} we treat the problem of  finding strongly-graded rings inside  an epsilon-strongly  graded ring (see Proposition \ref{pro2.2} and Theorem \ref{teo2.1}), this problem is related to the question of finding  global Galois extension inside partial Galois extension \cite{KuoSz, KuoSzII}.  Examples with Leavitt path algebras and matrix rings are considered.  

{\bf Conventions:} For functors $F$ and $F'$ we write $F\simeq F'$ to indicate that they are naturally equivalent.  Moreover, in this work  $S=\bigoplus_{g\in G}S_g$ will denote an associative unital  ring graded by a group $G$ with principal component $R=S_e,$   and unadorned $\otimes$ means $\otimes_R.$

\section{Preliminaries}\label{pre}
For the reader's convenience, we record from  \cite{nas04} some notions that will be used throughout this work. Let $S$ be  a  $G$-graded ring,   we say that a  left
$S$-module $M$ is {\it graded} if there is a family of additive
subgroups, $M_{g}$, $g\in G$, of $M$ such that $M
= \bigoplus\limits_{g \in G} M_{g}$, and $S_{g} M_{h} \subseteq 
M_{gh}$ for all $g,h\in
G.$ Any element $m\in M$ has a unique decomposition $m=\sum_{g\in G}m_g$ with $m_g\in M_g$ and all but a finite number of the $m_g$ are nonzero, the nonzero elements $m_g$ in the decomposition of $m$ are called the  {\it homogeneous components of m} and we write $\partial(m_g)=g.$ A submodule $N$ of the graded $S$-module $M$ is 
 said to be graded if  the equality $N=\bigoplus_{g\in G}(N\cap M_g)$ holds. If \{0\} and $M$ are the only graded submodules  of $M,$ then $M$ is said to be $gr$-simple.

  Let $R$-$mod$ and $S$-$gr$ denote  the categories of left $R$-modules and graded $S$-modules, respectively. The morphisms in  $S$-$gr$ are  $S$-linear maps $f : M \rightarrow
M'$ with the property $f(M_{g}) \subseteq M'_{g}$,
$g\in G$. It is known that $R$-$mod$ and $S$-$gr$ are abelian categories with enough projectives. There are several functors relating the categories  $R$-$mod$ and $S$-$gr$, among of them we consider      $\ind,\coind: R\text{-}mod\rightarrow S\text{-}gr$ and $(-)_g: S\text{-}gr \to R\text{-}mod, g\in G,$ which are defined as follows: the functor $(-)_g$ is the projection onto the $g$-th component, and  given a left $R$-module $M$  and $g\in G$ we set  $\ind(M):=S\otimes M$ with gradation $(S\otimes M)_g=S_g\otimes M,$ and $\coind(M)$ is the graded module with $g$-th component   $\coind(M)_g:=\{\varphi\in hom_{R}(S,M): \varphi(S_t)=\{0\},\forall  t\in G\setminus \{g^{-1}\}\}$.  Moreover,  for a morphism  $f:M\to M'$ in $R$-$mod$  we set $\ind(f):={\rm id}_S\otimes f$ and $\coind(f):=\coind(M)\ni \varphi\mapsto f\circ \varphi \in  \coind(M')$.  For a fixed  $g\in G$  the $g$-{\it suspension} of $M$ is the graded module $M(g)$ which coincides with $M$ as sets but has gradation $M(g)_h=M_{hg}, h\in G,$ this induces a {\it suspension functor} $T_g: S\text{-}gr\to  S\text{-}gr, $ sending $M$ to $M(g).$ Now consider the class
$\C_g=\{M\in S\text{-}gr \mid M_g=\{0\} \}$ and for $M\in S\text{-}gr $ we define  $t_{\C_g}(M)$ as the sum of the graded  submodules  of $M$ belonging to  $\C_g.$ If $t_{\C_g}(M)=\{0\}$ then $M$ is called {\it $\C_g$-torsion free}. 

For a deep study of categorical methods in graded rings, the interested reader may consult \cite{AR}.

\section{Categorical  and ring theoretical aspects relating    nearly  epsilon and epsilon-strongly graded rings}\label{cate}

\subsection{A Dade like  theorem  for  nearly epsilon-strongly graded rings}
In \cite{L1} the author dealt with several classes of graded rings (see \cite[Definition 3.3]{L1}) between them the most general are the so-called nearly epsilon-strongly graded rings. Which we recall in the next.
\begin{defi}
The ring  $S$ is called  nearly  epsilon-strongly graded if  for each  $g\in G$ the additive group $S_g$ is a $s$-unital  $(S_gS_{g^{-1}},S_{g^{-1}}S_g)$-bimodule.
\end{defi}
Important examples of nearly epsilon-strongly graded rings are induced quotient group gradings of epsilon-strongly graded \cite[Proposition 5.8]{L1} and Leavitt path algebras  (even for infinite graphs) with coefficients in unital rings endowed  with any standard $G$-grading  \cite[Theorem 4.2]{NYO}.

Following \cite[Definition 4.5]{CEP2016} we say that $S$ is \emph{symmetrically graded} if for every $g \in G$, the equality $S_g S_{g^{-1}} S_g = S_g$ holds.

For the reader's convenience we recall the following.
\begin{pro}\cite[Proposition 3.3]{NYO}\label{pro1.5}
The following assertions are equivalent:\begin{enumerate}
    \item [$(i)$] The ring S is nearly epsilon-strongly G-graded;
    \item [$(ii)$] $S$ is symmetrically  graded  and for each $g\in G$ the ring $S_gS_{g^{-1}}$ is $s$-unital;
    \item [$(iii)$] For each $g\in G$ and every $s\in S_g,$ there are  $\epsilon_g(s)\in S_gS_{g^{-1}}$ and  $\epsilon_g'(s)\in S_{g^{-1}}S_g$  such that  $\epsilon_g(s)s=s=s\epsilon_g'(s)$.
\end{enumerate}
\end{pro}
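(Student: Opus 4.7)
My plan is to establish the cycle $(i)\Rightarrow(iii)\Rightarrow(ii)\Rightarrow(iii)\Rightarrow(i)$, where the two outer implications are immediate from unfolding the definition of an $s$-unital $(S_gS_{g^{-1}},S_{g^{-1}}S_g)$-bimodule, so that the substantive content is the equivalence $(ii)\Leftrightarrow(iii)$. The main tool throughout will be the standard \emph{$s$-unital extension lemma}: if $M$ is an $s$-unital left module over a ring $A$ (in the local sense provided by $(iii)$) and $m_1,\dots,m_n\in M$, then there is a single $e\in A$ with $em_i=m_i$ for all $i$. This is proved by induction, each step replacing $e_n$ by $e_n+f-fe_n$, where $f$ is an $s$-unit for $m_{n+1}-e_nm_{n+1}$; the analogous statement on the right, and for bimodules, follows by the same trick.

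For $(iii)\Rightarrow(ii)$ I would first derive symmetric gradation: given $s\in S_g$, expand $\epsilon_g'(s)=\sum_i a_ib_i$ with $a_i\in S_{g^{-1}}$, $b_i\in S_g$, so that $s=s\epsilon_g'(s)=\sum_i(sa_i)b_i\in S_gS_{g^{-1}}S_g$; the reverse inclusion is trivial. To show $S_gS_{g^{-1}}$ is $s$-unital as a ring, take $x=\sum_i s_it_i\in S_gS_{g^{-1}}$; the local units $\epsilon_g(s_i)\in S_gS_{g^{-1}}$ together with the extension lemma yield a single $e_L\in S_gS_{g^{-1}}$ with $e_Ls_i=s_i$ for every $i$, hence $e_Lx=x$, and applying $(iii)$ at $g^{-1}$ produces $\epsilon_{g^{-1}}'(t_i)\in S_gS_{g^{-1}}$ from which, by the same argument, one builds $e_R\in S_gS_{g^{-1}}$ with $xe_R=x$. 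A two-sided $s$-unit for $x$ inside $S_gS_{g^{-1}}$ is then the usual combination $e:=e_L+e_R-e_Re_L$, which satisfies both $ex=x$ and $xe=x$.

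For $(ii)\Rightarrow(iii)$, fix $s\in S_g$ and use $S_g=S_gS_{g^{-1}}S_g$ to write $s=\sum_i u_ic_i$ with $u_i\in S_gS_{g^{-1}}$, $c_i\in S_g$, and also $s=\sum_j a_jv_j$ with $a_j\in S_g$, $v_j\in S_{g^{-1}}S_g$ (which is $s$-unital by applying $(ii)$ at $g^{-1}$). The extension lemma applied to $\{u_i\}\subset S_gS_{g^{-1}}$ and to $\{v_j\}\subset S_{g^{-1}}S_g$ delivers $\epsilon_g(s)\in S_gS_{g^{-1}}$ with $\epsilon_g(s)u_i=u_i$ and $\epsilon_g'(s)\in S_{g^{-1}}S_g$ with $v_j\epsilon_g'(s)=v_j$; then $\epsilon_g(s)s=\sum_i\epsilon_g(s)u_ic_i=\sum_iu_ic_i=s$ and symmetrically $s\epsilon_g'(s)=s$, as required.

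I expect no deep obstacle; the only delicate point is bookkeeping the indices. In particular one must keep in mind that $\epsilon_{g^{-1}}'(t)$ for $t\in S_{g^{-1}}$ belongs to $S_{g^{-1}}(S_{(g^{-1})^{-1}})=S_gS_{g^{-1}}$, not to $S_{g^{-1}}S_g$, and that applying hypothesis $(ii)$ at $g^{-1}$ is what delivers the $s$-unitality of $S_{g^{-1}}S_g$; once these identifications are kept straight the proof is a routine composition of the extension lemma with symmetric factorizations.
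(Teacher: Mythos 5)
Your argument is correct, but note that the paper does not prove this proposition at all: it is recalled verbatim from \cite[Proposition 3.3]{NYO}, so there is no in-paper proof to compare against. Your route --- reducing both directions of $(ii)\Leftrightarrow(iii)$ to Tominaga's common-local-unit lemma (which the paper itself invokes as \cite[Theorem 1]{T} in the proof of Theorem \ref{neie}) together with the symmetric factorization $S_g=S_gS_{g^{-1}}S_g$, and observing that $(i)\Leftrightarrow(iii)$ is just the definition of an $s$-unital bimodule --- is the standard argument and the index bookkeeping (e.g.\ $\epsilon_{g^{-1}}'(t)\in S_gS_{g^{-1}}$) is handled correctly.
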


We present a Dade like Theorem (see \cite[Theorem 2.8]{D}) for the class of nearly epsilon-strongly graded rings. First we give the following.
\begin{lem}\label{sres}
Let  $M$ in $S$-gr,  and write $S_{(g,g\m)}=S_gS_{g^{-1}}, g\in G,$  then $S(M):=\displaystyle\bigoplus_{g\in G}S_{(g,g\m)}M_g$ is a graded  submodule of $M$. 
\end{lem}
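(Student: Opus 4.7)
The plan is to verify two things: that the internal sum defining $S(M)$ is actually direct and respects the grading of $M$, and that $S(M)$ is closed under the left $S$-action. First I would observe that
\[
S_{(g,g\m)}M_g = S_gS_{g\m}M_g \subseteq S_e M_g \subseteq M_g,
\]
so the $g$-summand of $S(M)$ already sits inside $M_g$. Since the $M_g$ form a direct sum in $M$, the sum $\bigoplus_{g\in G} S_{(g,g\m)}M_g$ is automatically direct, and once $S(M)$ is known to be an $S$-submodule the identity $S(M)\cap M_g = S_{(g,g\m)}M_g$ witnesses that it is graded.

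It then remains to check the submodule property. By additivity and the homogeneous decomposition of $S$, it suffices to prove that for all $g,h\in G$, every $s_h\in S_h$, and every generator $x = s_g t_{g\m} m_g \in S_{(g,g\m)}M_g$ with $s_g\in S_g$, $t_{g\m}\in S_{g\m}$, $m_g\in M_g$, the product $s_h x$ lies in $S_{(hg,(hg)\m)}M_{hg}$, i.e.\ in the $hg$-summand of $S(M)$.

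The key ingredient will be the symmetric-gradation identity $S_{hg} = S_{hg}S_{(hg)\m}S_{hg}$ furnished by Proposition \ref{pro1.5}(ii). Applying it to the product $s_h s_g \in S_{hg}$, I would write $s_h s_g = \sum_i a_i b_i c_i$ with $a_i, c_i \in S_{hg}$ and $b_i \in S_{g\m h\m}$, and then re-associate:
\[
s_h x \;=\; (s_h s_g)\, t_{g\m} m_g \;=\; \sum_i a_i b_i \bigl(c_i t_{g\m} m_g\bigr).
\]
Since $c_i t_{g\m} \in S_{hg}S_{g\m} \subseteq S_h$, the rightmost factor $c_i t_{g\m} m_g$ lies in $S_h M_g \subseteq M_{hg}$, so the whole sum belongs to $S_{hg}S_{(hg)\m}M_{hg}$, as required.

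The main obstacle I expect is exactly this regrouping: a priori $s_h x$ is manifestly homogeneous of degree $hg$, but not obviously of the form (element of $S_{hg}$)$\cdot$(element of $S_{(hg)\m}$)$\cdot$(element of $M_{hg}$); the symmetric-gradation identity is precisely what supplies the missing factor of degree $(hg)\m$ needed to package $s_h x$ in the shape demanded by $S(M)$. Once this repackaging is in place the rest of the argument is bookkeeping.
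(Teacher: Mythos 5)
Your verification of gradedness is essentially the paper's argument: you note $S_{(g,g^{-1})}M_g\subseteq M_g$, conclude the sum is direct, and obtain $S(M)\cap M_h=S_{(h,h^{-1})}M_h$ from uniqueness of homogeneous decompositions. The problem is the submodule step. There you invoke the identity $S_{hg}=S_{hg}S_{(hg)^{-1}}S_{hg}$ via Proposition \ref{pro1.5}(ii), but that identity is available only when $S$ is symmetrically graded (equivalently, as part of being nearly epsilon-strongly graded), whereas Lemma \ref{sres} is stated for an arbitrary graded ring $S$ --- indeed the lemma is what is supposed to make the functor $S({\rm id})$ in condition $(iv)$ of Theorem \ref{teo1.1} well defined \emph{before} any of the equivalent conditions is assumed. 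So, as written, your argument establishes the lemma only under an extra hypothesis that the statement does not grant you.

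That said, your instinct that some such input is unavoidable is correct: closure of $S(M)$ under the $S$-action genuinely fails for general graded rings. Take $S=R[x]$ with its usual $\mathbb{Z}$-grading ($S_n=Rx^n$ for $n\ge 0$ and $S_n=\{0\}$ otherwise) and $M=S$; then $S_{(n,-n)}=\{0\}$ for $n\neq 0$, so $S(M)=R$, which is not an $S$-submodule of $R[x]$. The paper's own proof never checks closure under multiplication by $S$ at all --- it only verifies compatibility with the grading --- so it does not repair this either. The honest assessment is therefore: for symmetrically graded $S$ your proof is complete and in fact more thorough than the paper's, but neither argument proves the lemma in the stated generality; the statement needs the symmetric-gradation hypothesis added (or the conclusion weakened to ``graded $R$-submodule''). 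You should flag this dependence explicitly rather than cite Proposition \ref{pro1.5}(ii) as if it applied unconditionally.
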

\begin{proof}
Take $h\in G$, we need to show that  $S(M)\cap M_h=S_{(h,h\m)}M_h$. It is clear that $S(M)\cap M_h\supseteq S_{(h,h\m)}M_h,$ for the other inclusion take $m\in S(M)\cap M_h$  a non zero-element and write $m=\sum_{i=1}^n m_{g_i}$ the homogeneous decomposition of $m,$ for some  $n\in \mathbb{N}$ and $m_{g_i}\in S_{(g_i,g\m_i)}M_{g_i}$. If $h\notin\{g_1\cdots g_n\},$ then    $m\in (M_{g_1}+\cdots +M_{g_n})\cap M_h=\{0\},$ which is a contradiction and $h\in\{g_1,\cdots g_n\}$ then $m=m_h \in S_{(h,h\m)}M_h$.
\end{proof}
Because of Lemma \ref{sres} we can consider the covariant  restriction functor $S({\rm id}):S\text{-}gr\rightarrow S\text{-}gr$ sending a graded module $M$ to its submodule $S(M).$

Now we give a version of  Dade's theorem (see \cite[Theorem 2.8]{D}) for  the class of nearly epsilon-strongly graded rings.
\begin{teo}\label{teo1.1}
The following statements are equivalent:\begin{enumerate}
    \item [$(i)$] $S$ is nearly epsilon-strongly graded;
    \item [$(ii)$] For  $M\in S$-gr  and $g,h\in G$, $S_{(g,g^{-1})}$ is s-unital and  $S_gM_h=S_{(g,g^{-1})}M_{gh}$;
    \item [$(iii)$] For $M\in S$-gr and $g,h\in G$, $S_{(g,g^{-1})}$ is s-unital and the multiplication map  $m_{g,h}:S_g\otimes M_h\rightarrow S_{(g,g^{-1})}M_{gh}$ is an isomorphism in R-mod;
    \item [$(iv)$] For $g\in G$ the ring $S_{(g,g^{-1})}$ is s-unital, and the family of multiplications maps 
        $\tau=\{\tau_M:S\otimes M_e\rightarrow \bigoplus_{g\in G}S_{(g,g^{-1})}M_g\}_{M\in S-gr}$ is a natural isomorphism between the functors $S ({\rm id})$ and $S\otimes (-)_e$.
\end{enumerate}
\end{teo}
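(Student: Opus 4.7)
The plan is to run the cycle $(i) \Rightarrow (ii) \Rightarrow (iii) \Rightarrow (iv) \Rightarrow (i)$, invoking Proposition \ref{pro1.5} at the two endpoints to translate between the $s$-unital/symmetric characterization of the nearly epsilon-strong property and the module-theoretic statements in $(ii)$--$(iv)$. Throughout I write $S_{(g,g^{-1})} = S_g S_{g^{-1}}$ as in Lemma \ref{sres}, and use the fact that $S$ being unital forces $R = S_e$ to be unital with identity $1_e$.

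For $(i) \Rightarrow (ii)$, Proposition \ref{pro1.5} supplies $s$-unitality of $S_{(g,g^{-1})}$ together with the symmetry $S_g = S_g S_{g^{-1}} S_g$. The inclusion $S_{(g,g^{-1})} M_{gh} \subseteq S_g M_h$ is immediate from $S_{g^{-1}} M_{gh} \subseteq M_h$, and the reverse follows from $S_g M_h = S_g S_{g^{-1}} S_g M_h \subseteq S_{(g,g^{-1})} M_{gh}$. The implication $(iii) \Rightarrow (iv)$ is largely formal: define $\tau_M$ as the direct sum over $g \in G$ of the component isomorphisms $m_{g,e}$; since each $m_{g,e}$ is induced by multiplication in $S$, the map $\tau_M$ is $S$-linear and graded, and naturality in $M$ for morphisms $f : M \to M'$ in $S$\text{-}gr reduces to the tautology $f(s_g m) = s_g f(m)$.

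The heart of the argument lies in $(ii) \Rightarrow (iii)$. Surjectivity of $m_{g,h}$ is the content of $(ii)$; for injectivity, given $\xi = \sum_{i=1}^n s_g^{(i)} \otimes m_h^{(i)}$ with $m_{g,h}(\xi) = 0$, use $s$-unitality of $S_{(g,g^{-1})}$ to pick a common left unit $u$ for the finitely many $s_g^{(i)}$, and expand $u = \sum_k a_k b_k$ with $a_k \in S_g$ and $b_k \in S_{g^{-1}}$. Since each $b_k s_g^{(i)}$ lies in $S_{g^{-1}} S_g \subseteq R$, it can be shuttled across $\otimes_R$, yielding
\[
\xi = \sum_{i} u s_g^{(i)} \otimes m_h^{(i)} = \sum_{k} a_k \otimes \Big(\sum_{i} b_k s_g^{(i)} m_h^{(i)}\Big) = \sum_{k} a_k \otimes b_k \Big(\sum_i s_g^{(i)} m_h^{(i)}\Big) = 0.
\]

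For $(iv) \Rightarrow (i)$, specialize $\tau$ to $M = S$: using the canonical iso $S \otimes_R R \simeq S$, the map $\tau_S$ becomes a graded isomorphism $S \to S(S) = \bigoplus_g S_{(g,g^{-1})} S_g$ whose $g$-component sends $s_g$ to $s_g$, forcing $S_g = S_{(g,g^{-1})} S_g = S_g S_{g^{-1}} S_g$, i.e.\ symmetric gradation. Combined with the hypothesized $s$-unitality of each $S_{(g,g^{-1})}$, Proposition \ref{pro1.5} then delivers $(i)$. The main obstacle I anticipate is the injectivity step in $(ii) \Rightarrow (iii)$: tensoring over $R$ does not directly cancel elements of $S_g$, and the key trick is to absorb an $s$-unit of $S_{(g,g^{-1})}$ and then transport the $S_{g^{-1}}$-factors across the tensor to reduce the computation to the vanishing assumption $\sum_i s_g^{(i)} m_h^{(i)} = 0$.
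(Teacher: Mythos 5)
Your proposal is correct and follows essentially the same route as the paper: the same appeal to Proposition \ref{pro1.5} at both ends, the same tensor-shuttling argument for injectivity of $m_{g,h}$, and the same specialization to $M=S$ for the converse directions; the only organizational difference is that you run the single cycle $(i)\Rightarrow(ii)\Rightarrow(iii)\Rightarrow(iv)\Rightarrow(i)$, whereas the paper proves $(i)\Leftrightarrow(ii)$ separately and then cycles through $(i)\Rightarrow(iii)\Rightarrow(iv)\Rightarrow(i)$. One small point to make explicit in your $(ii)\Rightarrow(iii)$ step: $s$-unitality of the ring $S_{(g,g^{-1})}$ by itself does not furnish a left unit for elements of $S_g$; you first need $S_g=S_{(g,g^{-1})}S_g$ (obtained from $(ii)$ with $M=S$ and $h=e$) and then a common $s$-unit for the finitely many $S_{(g,g^{-1})}$-coefficients appearing there, which is precisely why the paper derives the injectivity of $m_{g,h}$ from hypothesis $(i)$ rather than $(ii)$.
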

\begin{proof} $(i)\Rightarrow (ii)$ Take $M\in S$-gr  and $g,h\in G$.  By  (ii) of Proposition\ref{pro1.5} the ring $S$ is symmetrically graded and $S_{(g,g^{-1})}$ is $s$-unital, then $S_gM_h=S_gS_{g^{-1}}S_gM_h\subseteq S_gS_{g^{-1}}M_{gh}\subseteq S_gM_h$, and $S_gM_h=S_{(g,g\m)}M_{gh}.$ For $(ii)\Rightarrow (i),$ it is enough to show that $S$ is symmetrically graded. Take $M=S,$ then $S_g=S_gR=S_gS_{g\m}S_g,$ as desired. Now for $(i)\Rightarrow (iii)$ it is enough to show that  the map $m_{g,h}$ is a monomorphism. For this, take 
 $x=  \sum_{i=1}^n s_g^{(i)}\otimes m_h^{(i)}\in \ker(m_{g,h}),$ since  $S$ is nearly epsilon-strongly graded there exists  $e \in S_gS_{g^{-1}}$ such that $es_{g}^{(i)}=s_g^{(i)},$ for  $1\leq i\leq n$. Write  $e=\sum_{j=1}^{n_p}u_{g}^{(j)}v_{g^{-1}}^{(j)}$, then:
\begin{align*}
   x&=\sum_{i=1}^n s_g^{(i)}\otimes m_h^{(i)}=\sum_{i=1}^n es_g^{(i)}\otimes m_h^{(i)}=\sum_{i=1}^n\sum_{j=1}^{n_p} u_g^{(j)}v_{g^{-1}}^{(j)}s_g^{(i)}\otimes m_h^{(i)}\\
&=\sum_{i=1}^n\sum_{j=1}^{n_p} u_g^{(j)}\otimes v_{g^{-1}}^{(j)}s_g^{(i)} m_h^{(i)}=\sum_{j=1}^{n_p} u_g^{(j)}\otimes v_{g^{-1}}^{(j)}m_{g,h}(x)=0.
\end{align*}
and $m_{g,h}$ is an  isomorphism. Now we show $(iii)\Rightarrow (iv).$  Let $M\in S$-gr, it is clear that $\tau_M$ is a morphism in $S$-gr, and  $(\tau_M)_g=m_{g,e}, g\in G$ which implies that it is surjective.

Otherwise take $g\in G$ y $x\in ker(\tau_M)_g$ with  $x=\sum_{i=1}^na_i\otimes b_i,$  $a_i\in S_g$ y $b_i\in M_e$ for all $i$. Then  $0=\sum_{i=1}^na_ib_i=m_{g,e}(x)$ and $x\in ker(m_{g,e})=\{0\}$ which shows that  $ker(\tau_M)=\{0\}$. Moreover, the naturality of $\tau$ is clear. 
%
  Finally to show $iv)\Rightarrow (i)$ holds, again by (ii) of Proposition\ref{pro1.5} it is enough to show that $S$ is symmetrically graded, but  $S_g=S_gR=\tau_S(S_g\otimes R)=S_gS_{g^{-1}}S_g$, which ends the proof.
\end{proof}

\subsection{On epsilon-strongly graded rings}

Epsilon strongly graded ring were recently introduced and studied in \cite{NYOP}, for the reader's convenience we record  this concept.

\begin{defi}\label{definitionepsilon}
The ring  $S$ is {\it epsilon-strongly graded by} $G$ if it is unital and
 for each $g \in G$ the  $(S_g S_{g^{-1}}, S_{g\m}S_g)$-bimodule $S_g$ is unital. 
\end{defi}

Recall a characterization of epsilon-strongly graded rings.

\begin{pro} \cite[Proposition 7]{NYOP}\label{epsilon1} 
The following assertions are equivalent:
\begin{enumerate}

\item[$(i)$] $S$ is epsilon-strongly graded by $G$; 

\item[$(ii)$] $S$ is symmetrically graded and for every $g \in G$ the $R$-ideal
$S_g S_{g^{-1}}$ is unital;

\item[$(iii)$] For every $g \in G$ there is an element $\epsilon_g \in S_g S_{g^{-1}}$
such that for all $s \in S_g$ the relations $\epsilon_g s = s = s \epsilon_{g^{-1}}$ hold;

\item[$(iv)$] For every $g \in G$ the left $R$-module $S_g$ is finitely generated
and projective and the map $n_g : (S_g)_R \rightarrow {\rm Hom}_R( {}_R S_{g^{-1}} , R )_R$,
defined by $n_g(s)(t) = ts$, for $s \in S_g$ and $t \in S_{g^{-1}}$,
is an isomorphism of $R$-modules.

\end{enumerate}
\end{pro}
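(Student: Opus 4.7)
The plan is to prove the equivalences as three blocks, namely (i)$\Leftrightarrow$(iii), (ii)$\Leftrightarrow$(iii) and (iii)$\Leftrightarrow$(iv), since (iii) is the most convenient ``hub'' condition and each of the other three items is naturally compared against it.

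First I would dispose of (i)$\Leftrightarrow$(iii), which is essentially unfolding the definition: saying that $S_g$ is a unital $(S_gS_{g^{-1}},S_{g^{-1}}S_g)$-bimodule amounts to exhibiting a left identity $\epsilon_g\in S_gS_{g^{-1}}$ and a right identity $\epsilon'_g\in S_{g^{-1}}S_g$. Applying (iii) with $g$ replaced by $g^{-1}$ gives $\epsilon_{g^{-1}}\in S_{g^{-1}}S_g$, and the relation $s\epsilon_{g^{-1}}=s$ identifies it as the required right identity, so (iii) is just a spelled-out version of (i).

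Next I would handle (ii)$\Leftrightarrow$(iii). For (iii)$\Rightarrow$(ii) I would verify that $\epsilon_g$ acts as a two-sided identity on every element $xy\in S_gS_{g^{-1}}$ (using $\epsilon_g x=x$ and, from (iii) applied to $g^{-1}$, $y\epsilon_g=y$), so $S_gS_{g^{-1}}$ is unital; symmetric grading is immediate from $s=s\epsilon_{g^{-1}}\in S_g\cdot S_{g^{-1}}S_g$. Conversely, assuming (ii) and writing the unit of $S_gS_{g^{-1}}$ as $\epsilon_g$, I would use symmetric grading to write any $s\in S_g$ as a finite sum $\sum x_iy_iz_i$ with $x_i,z_i\in S_g$, $y_i\in S_{g^{-1}}$; then $\epsilon_g(\sum x_iy_i)=\sum x_iy_i$ forces $\epsilon_gs=s$, and the right identity is produced symmetrically.

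The real work is (iii)$\Leftrightarrow$(iv). For (iii)$\Rightarrow$(iv), writing $\epsilon_{g^{-1}}=\sum_j u_jv_j$ with $u_j\in S_{g^{-1}}$, $v_j\in S_g$, the relation $s=s\epsilon_{g^{-1}}=\sum_j(su_j)v_j$ gives a finite dual basis $\{v_j,\psi_j\}$ of $_RS_g$, where $\psi_j(s)=su_j$ is manifestly left $R$-linear; thus $_RS_g$ is finitely generated projective. For the map $n_g$, injectivity follows from $s=\epsilon_gs=\sum_i x_i(y_is)=\sum_i x_i n_g(s)(y_i)$; surjectivity from the ansatz $s:=\sum_i x_i\phi(y_i)$, which using $t=t\epsilon_g$ and left $R$-linearity of $\phi$ yields $n_g(s)(t)=\phi(t)$. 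For the converse (iv)$\Rightarrow$(iii), I would pick any dual basis $\{v_j,\psi_j\}$ of $_RS_g$; applying the iso $n_{g^{-1}}$ (which exists because (iv) is assumed for \emph{every} group element) I write $\psi_j=n_{g^{-1}}(u_j)$ with $u_j\in S_{g^{-1}}$, so $\psi_j(s)=su_j$ and consequently $s=\sum_j su_jv_j=s\bigl(\sum_j u_jv_j\bigr)$, producing the required $\epsilon_{g^{-1}}\in S_{g^{-1}}S_g$; the element $\epsilon_g$ is obtained by the symmetric argument on $S_{g^{-1}}$.

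I expect the main obstacle to be the bookkeeping of sides in the (iii)$\Leftrightarrow$(iv) step: $_RS_g$ is a left module, whereas the natural factorization $s=\epsilon_g s=\sum x_i(y_is)$ produces coefficients on the \emph{right} of $S_g$-generators, so one has to use $\epsilon_{g^{-1}}$ rather than $\epsilon_g$ to read off a left dual basis. Once the left/right convention is fixed consistently, the rest is an essentially mechanical translation between dual bases of $_RS_g$ and isomorphisms of the form $n_g$.
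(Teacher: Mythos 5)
First, a framing remark: the paper itself gives no proof of this proposition — it is quoted verbatim from \cite[Proposition 7]{NYOP} — so your argument can only be measured against the standard proof in that reference, which your outline essentially reproduces: using (iii) as the hub and dual bases for the equivalence with (iv) is exactly the expected route.

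The blocks (i)$\Leftrightarrow$(iii), (ii)$\Leftrightarrow$(iii) and (iii)$\Rightarrow$(iv) are correct as you describe them. The one genuine gap is in (iv)$\Rightarrow$(iii). Your construction produces, for each $g$, an element $\epsilon_{g^{-1}}=\sum_j u_jv_j\in S_{g^{-1}}S_g$ with $s\epsilon_{g^{-1}}=s$ for all $s\in S_g$, and the ``symmetric argument on $S_{g^{-1}}$'' produces $\epsilon_g\in S_gS_{g^{-1}}$ with $t\epsilon_g=t$ for all $t\in S_{g^{-1}}$. But (iii) also demands the left relations $\epsilon_g s=s$ for $s\in S_g$ (equivalently $\epsilon_{g^{-1}}t=t$ for $t\in S_{g^{-1}}$), and these are not what the symmetric argument delivers: both halves of your construction, being built from dual bases of \emph{left} $R$-modules, only ever yield \emph{right} identities. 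The missing relations do follow, but you must invoke the injectivity of $n_g$ once more: for every $t\in S_{g^{-1}}$ one has $t(\epsilon_g s)=(t\epsilon_g)s=ts$, so $n_g(\epsilon_g s)=n_g(s)$ and hence $\epsilon_g s=s$. Without this (or an equivalent device) the implication (iv)$\Rightarrow$(iii) is incomplete.

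A smaller point worth a sentence in a written version: in (i)$\Rightarrow$(iii) the definition of unital bimodule hands you a left unit $\epsilon_g\in S_gS_{g^{-1}}$ and a right unit $\epsilon'_g\in S_{g^{-1}}S_g$ for $S_g$ that are a priori unrelated to the left unit of $S_{g^{-1}}$; to get the single coherent family $\{\epsilon_g\}_{g\in G}$ of (iii) one should note that $\epsilon_{g^{-1}}$ is a left identity and $\epsilon'_g$ a right identity of the ring $S_{g^{-1}}S_g$, hence $\epsilon'_g=\epsilon_{g^{-1}}$. Everything else — the dual basis $\{v_j,\psi_j\}$ with $\psi_j(s)=su_j$, the injectivity and surjectivity computations for $n_g$, and the derivation of symmetric grading from $s=s\epsilon_{g^{-1}}$ — checks out.
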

The following is clear.
\begin{coro}\label{epcero} Suppose that $S$ is epsilon-strongly graded and let $\ep_g$  be the identity of $S_gS_{g\m}, g\in G.$ Then  $\ep_g=0$ if and only if $S_g=\{0\}.$
\end{coro}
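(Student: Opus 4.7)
The statement follows almost immediately from part (iii) of Proposition \ref{epsilon1}, so the plan is essentially to unpack the two implications and observe that each is a one-line computation.

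For the reverse implication, I would first assume $S_g = \{0\}$. Then $S_g S_{g^{-1}} = \{0\}$, and the unique identity element of the zero ring is $0$, so $\epsilon_g = 0$. No further ingredient is needed.

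For the forward implication, I would assume $\epsilon_g = 0$ and pick an arbitrary $s \in S_g$. Invoking Proposition \ref{epsilon1}(iii), we have $s = \epsilon_g s = 0 \cdot s = 0$, which forces $S_g = \{0\}$. This is the only place where the epsilon-strong hypothesis is genuinely used, through the existence of a left identity $\epsilon_g$ acting on $S_g$.

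There is no real obstacle here: both directions are immediate and the whole argument is just a restatement of the absorbing property of $\epsilon_g$. The only thing worth being careful about is that $\epsilon_g$ is by hypothesis the \emph{ring} identity of $S_g S_{g^{-1}}$, so when $S_g S_{g^{-1}} = \{0\}$ one must admit the degenerate case $\epsilon_g = 0$ rather than declaring $S_g S_{g^{-1}}$ non-unital. Given this convention, the corollary is automatic and is correctly labeled as clear.
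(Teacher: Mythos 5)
Your argument is correct and matches what the paper intends: the paper offers no written proof (it simply declares the corollary clear), and the two one-line implications you give — $s=\epsilon_g s=0$ for the forward direction via Proposition \ref{epsilon1}(iii), and $\epsilon_g\in S_gS_{g^{-1}}=\{0\}$ for the converse — are exactly the intended justification. Nothing is missing.
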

\begin{rem} It is shown in  \cite[Proposition 2.6.7]{nas04} that when $S$ is strongly graded then the functors $\ind$ and $\coind$ are naturally equivalent,  however in  \cite[Remark  2.6.8]{nas04} the authors observe that there are  not strongly graded rings for which $\ind \simeq \coind,$ indeed it is enough to take a non trivial group $G\neq \{e\}$ and a  unital ring $R$ with trivial gradation, that is $R_e=R$ and $R_g=\{0\},$ for all $g\neq e.$ Notice that in this case $R$ is  epsilon-strongly graded with $\epsilon_e=1_R$ and $\epsilon_g=0, g\neq e.$ 
\end{rem}

The next result provides a characterization of epsilon-strongly graded rings in terms of the functors Ind and Coind.

\begin{pro}\label{indcoind}
The following assertions are equivalent.
\begin{enumerate}

\item[$(i)$] $S$  is epsilon-strongly graded;

\item[$(ii)$] For every $g\in G$ the left R-module $S_g$ is finitely generated and projective, and the map 
        $\eta_g':\, _RS_g\rightarrow\, _Rhom_R(_RS_{g^{-1}},\, _RR)$,
 defined by $\eta_g'(s):S_{g^{-1}}\rightarrow R$  $\eta_g'(s)(r)=rs,$ for all $r\in R, s\in S_{g\m}$ is an isomorphism of left R-modules;

\item[$(iii)$] $\ind \simeq \coind.$
\end{enumerate}
\end{pro}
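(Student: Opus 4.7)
My plan is to close the cycle $(i)\Rightarrow (ii)\Rightarrow (iii)\Rightarrow (i)$. The equivalence $(i)\Leftrightarrow (ii)$ is simply the left-handed analogue of the equivalence $(i)\Leftrightarrow (iv)$ in Proposition~\ref{epsilon1}; since $S$ is epsilon-strongly graded if and only if its opposite $S^{op}$ is (with induced grading $(S^{op})_g=S_{g^{-1}}$), I would deduce $(i)\Leftrightarrow (ii)$ by applying Proposition~\ref{epsilon1} to $S^{op}$ and relabelling, or equivalently by transcribing the argument given there with the roles of left and right modules interchanged.

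For $(ii)\Rightarrow (iii)$, I would introduce the natural transformation $\Phi:\ind\to\coind$ whose component at $M\in R\text{-}\mathrm{mod}$ is defined on homogeneous tensors by
\[
\Phi_M(s\otimes m)(t)=tsm,\qquad s\in S_g,\ m\in M,\ t\in S_{g^{-1}},
\]
and vanishes on $S_h$ for $h\neq g^{-1}$. Routine checks show that $\Phi_M$ is a graded $S$-module morphism and that $\Phi$ is natural in $M$. Its $g$-component factors as
\[
S_g\otimes M\xrightarrow{\eta_g'\otimes {\rm id}_M}\hom_R(S_{g^{-1}},R)\otimes M\longrightarrow \hom_R(S_{g^{-1}},M),
\]
where the second arrow is the canonical evaluation map. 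By $(ii)$ applied to $g^{-1}$, the module ${}_RS_{g^{-1}}$ is finitely generated and projective, so the evaluation map is a natural isomorphism; the first arrow is an isomorphism by hypothesis. Hence $\Phi_M$ is a natural isomorphism.

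For $(iii)\Rightarrow (i)$, let $\alpha:\ind\xrightarrow{\sim}\coind$ be any natural isomorphism and set $\phi_0:=\alpha_R(1)\in\coind(R)_e$. Under the canonical identification $\coind(R)_e\cong \hom_R(R,R)\cong R$, $\phi_0$ corresponds to a unique $c\in R$, and the degree $e$ component of $\alpha_R$ is the map $r\mapsto rc$, whose bijectivity forces $c$ to be a unit of $R$. Left $S$-linearity of $\alpha_R$ then yields, for $s\in S_g$ and $t\in S_{g^{-1}}$,
\[
\alpha_R(s)(t)=(s\cdot\phi_0)(t)=\phi_0(ts)=tsc,
\]
so $s\mapsto(t\mapsto tsc)$ is an isomorphism $S_g\to \hom_R(S_{g^{-1}},R)$. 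Composing with right multiplication by $c^{-1}$ on the codomain, which is a left $R$-module automorphism, shows that $\eta_g'$ itself is an isomorphism. Naturality of $\alpha$ in $M$ also exhibits $\hom_R(S_{g^{-1}},-)$ as naturally isomorphic to the tensor functor $S_g\otimes_R -$, a cocontinuous right-exact functor, forcing ${}_RS_{g^{-1}}$ to be finitely generated and projective; replacing $g$ by $g^{-1}$ gives $(ii)$, and then $(ii)\Rightarrow (i)$ closes the cycle.

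The main obstacle I foresee is in $(iii)\Rightarrow (i)$: the abstract natural isomorphism $\alpha$ need not coincide with the explicit map $\Phi$ constructed in the reverse direction. The crucial trick is to extract the unit $c\in R$ from $\alpha_R(1)$ and use the cyclicity of $\coind(R)$ as a left $S$-module generated by $\phi_0$; only after normalising by $c^{-1}$ does one recognise the multiplication-induced map $\eta_g'$ as the sought isomorphism.
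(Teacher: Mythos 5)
Your proposal is correct and follows the same two-step architecture as the paper: $(i)\Leftrightarrow(ii)$ as the left-handed counterpart of $(i)\Leftrightarrow(iv)$ in Proposition \ref{epsilon1}, and then the identification of $\ind\simeq\coind$ with condition $(ii)$. The only difference is that the paper disposes of $(ii)\Leftrightarrow(iii)$ by citing \cite[Theorem 2.6.9]{nas04}, whereas you supply a self-contained proof of that step (the evaluation-map factorization for $(ii)\Rightarrow(iii)$ and the extraction of the unit $c=\alpha_R(1)(1)$ for the converse), which is essentially the argument behind the cited result.
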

\begin{proof} The proof of  $(i) \Leftrightarrow (ii)$ is similar to $(i) \Leftrightarrow (iv)$ in Proposition \ref{epsilon1} and the equivalence  $(ii) \Leftrightarrow (iii)$ follows from \cite[Theorem 2.6.9]{nas04}.
\end{proof}

We proceed with  the next.

\begin{coro}
Let $S$ be an  epsilon-strongly graded ring and $g\in G$. Then:\begin{enumerate}
    \item [$(i)$] The functor $\ind\circ (-)_g$ preserves the $gr-$simplicity. 
    \item [$(ii)$] If $M\in S\text{-}gr$ is injective and  $\C_g$-torsion free, then  $\ind(M_g)$ is injective.
    \item [$(iii)$]  If $M\in S\text{-}gr$ is projective,  then $M_g$ is projective in $R$-mod.
\end{enumerate}
\end{coro}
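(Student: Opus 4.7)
The plan is to treat the three items separately, pairing each with the characterization of the epsilon-strong gradation that best fits: Proposition \ref{epsilon1}(iv) for (iii), Proposition \ref{indcoind} for (ii), and the generalized Dade theorem (Theorem \ref{teo1.1}) for (i).

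I would start with (iii), the softest of the three. Since $M$ is projective in $S$-$gr$, choose $N\in S$-$gr$ and a family $(h_i)_{i\in I}\subseteq G$ with $M\oplus N\simeq \bigoplus_{i\in I}S(h_i)$. Applying $(-)_g$, which commutes with direct sums, exhibits $M_g$ as a direct summand of $\bigoplus_{i\in I}S_{gh_i}$. Proposition \ref{epsilon1}(iv) tells us that each $S_{gh_i}$ is finitely generated projective in $R$-$mod$, so their direct sum is projective, and hence so is $M_g$.

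For (ii) I would start from the observation that the projection $(-)_e:S\text{-}gr\to R\text{-}mod$ is exact, so its right adjoint $\coind$ sends injective $R$-modules to injectives in $S$-$gr$. Proposition \ref{indcoind} provides a natural equivalence $\coind\simeq \ind$, which transfers this property to $\ind$ and reduces the task to showing that $M_g$ is injective in $R$-$mod$. I would reduce further to the case $g=e$ via the suspension $T_g$: since $T_g$ is an autoequivalence of $S$-$gr$, $M(g)$ remains injective, and $\C_g$-torsion freeness of $M$ corresponds to $\C_e$-torsion freeness of $M(g)$ because $N\in\C_g$ iff $N(g)\in\C_e$. The classical result from \cite{nas04} that the $e$-component of a $\C_e$-torsion free injective graded module is injective in $R$-$mod$ then yields that $M_g=M(g)_e$ is injective, completing the argument.

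Part (i) is where the epsilon-strong structure genuinely enters and is the step I expect to be the main obstacle. If $M_g=0$ then $\ind(M_g)=0$ and there is nothing to check, so assume $M_g\neq 0$. My plan is to produce a graded isomorphism $\ind(M_g)\simeq M(g)$, from which the conclusion is immediate because the suspension $M(g)$ is again $gr$-simple. The natural candidate is the multiplication map $\mu:S\otimes M_g\to M(g)$, $s\otimes m\mapsto sm$. Since an epsilon-strong gradation is in particular nearly epsilon-strong, Theorem \ref{teo1.1}(iii) applied to $M$ identifies the $h$-th homogeneous component of $\mu$ with the isomorphism $S_h\otimes M_g\simeq S_{(h,h^{-1})}M_{hg}=S_hM_g$, which handles injectivity. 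For surjectivity, the image of $\mu$ is the graded submodule $SM_g$ of $M$, and the $gr$-simplicity of $M$ together with $M_g\neq 0$ forces $SM_g=M$, so $S_hM_g=M_{hg}$ for every $h\in G$. The two conclusions together show that $\mu$ is a graded isomorphism, and (i) follows.
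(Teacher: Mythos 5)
Your proof is correct, but each of the three items is reached by a route genuinely different from the paper's. For (iii) the paper never decomposes $M$ as a summand of graded free modules: it deduces exactness of $\coind$ from $\ind\simeq\coind$, invokes the adjunction $((-)_g,\,T_{g^{-1}}\circ\coind)$ of \cite[Theorem 2.5.5]{nas04}, and concludes that the left adjoint $(-)_g$ preserves projectives; your summand-of-$\bigoplus_i S(h_i)$ argument is more elementary and needs only Proposition \ref{epsilon1}(iv). For (ii) the paper proves the stronger statement $\ind(M_g)\simeq\coind(M_g)\simeq M(g)$ in one stroke, by showing that $\mu_{g,M}\colon M\to\coind(M_g)(g^{-1})$ is a monomorphism with essential image which must split off; you instead first extract injectivity of $M_g$ in $R$-mod (via suspension and the description of $\C_e$-torsion-free gr-injectives) and then push it forward through $\coind\simeq\ind$ --- both are sound, but the paper's route also hands you the identification of $\ind(M_g)$ with $M(g)$, while yours leans on an extra fact from \cite{nas04} (that the $e$-component of a $\C_e$-torsion-free gr-injective is injective over $R$) which is true for arbitrary graded rings but which you should either reference precisely or derive from \cite[Propositions 2.6.2 and 2.6.3]{nas04} by the same splitting argument. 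For (i) the paper shows $M_g$ is a simple $R$-module, quotes \cite[Theorem 2.7.2]{nas04} to get that $\ind(M_g)/t_{\C_e}(\ind(M_g))$ is gr-simple, and kills the torsion using $\ind\simeq\coind$; your multiplication map $S\otimes M_g\to M(g)$ (injective on each component by Theorem \ref{teo1.1}(iii), surjective because $SM_g$ is a nonzero graded submodule of the gr-simple $M$) is more self-contained, avoids Theorem 2.7.2 entirely, and again yields the sharper conclusion $\ind(M_g)\simeq M(g)$.
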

\begin{proof}

$(i)$ Let  $M$ be a $gr$-simple module and  $g\in G$. If $M_g=0$ we are done. Suppose  $M_g\neq 0$ and take  $0\neq m\in M_g,$ the $gr$-simplicity of $M$ implies  $\bigoplus_{h\in G}S_{hg^{-1}}m=M,$ and thus $Rm=M_g$,  then $M_g$ is a simple $R$-module.  On the other hand, by the first part of   \cite[ Theorem 2.7.2]{nas04} we have  that $\ind(M_g)/t_{C_e}(\ind(M_g))$ is $gr-$simple. Finally since   $S$ is epsilon-strong then Proposition \ref{indcoind} implies  $\ind(M_g)\simeq \coind(M_g)$ as $R$-modules, hence $t_{\C_e}(\ind(M_g))=0,$ and we conclude that $\ind(M_g)$ is $gr$-simple.

$(ii)$ Take $h\in G.$ Since  $t_{C_g}(M)=0$ we have by \cite[ Proposition 2.6.3]{nas04} that the  morphism $\mu_{g,M}:M \rightarrow \coind(M_g)(g^{-1})$ defined by  $\mu_{g,M}(m):S\ni s\mapsto s_{gh^{-1}}m_h\in M_g$ is a  monomorphism, where $s=\sum_{l\in G}s_l$ is the homogeneous decomposition of $s.$ Now the fact that $M$ is $gr$-injective implies that  $Im(\mu_{g,M})$ is a direct summand of  $\coind(M_g)(g^{-1})$. But by \cite[ Proposition 2.6.2]{nas04} we obtain that  $Im(\mu_{g,M})$ is an essential  $gr$- submodule of  $\coind(g)(g^{-1})$, whence  $Im(\mu_{g,M})=\coind(M_g)(g^{-1})$. Therefore  $M\simeq \coind(M_g)(g^{-1})$ and $\ind(M_g)\simeq \coind(M_g)\simeq M(g)$, which shows that $\ind(M_g)$ es injective.

$(iii)$  Since  $\ind$ is right exact  and  $\coind$ is left exact, the natural isomorphism $Ind\simeq Coind$ implies that  $\ind$ and $\coind$ are both exact.  Then $T_{g^{-1}}\circ \coind$ is exact and by  a. of  \cite[Theorem 2.5.5]{nas04} we have that  $((-)_g, T_{g^{-1}}\circ \coind)$  is an adjoint pair, then the fact that $R$-gr has enough projectives implies that   $(-)_g$ preserves  the projectivity property, from this we conclude that  $M_g$ is projective as $R-$module.
%
\end{proof}

{\begin{rem}
	It must be highlighted that, nearly epsilon-strongly graded rings can be non-unital although epsilon-strongly graded rings have to be unital. Moreover, the readers should keep in mind that our approach to these class of rings is under the assumption that they are unital.
\end{rem}

It is clear that every epsilon-strongly graded ring is nearly epsilon-strongly graded, but not every nearly epsilon-strongly graded ring is strongly graded (see \cite[Example 7.4]{L2}). The following result gives a criteria to determine when a nearly epsilon-strongly graded ring is epsilon-strongly graded 

\begin{teo}\label{neie}
The ring  $S$ is  epsilon-strongly  graded ring if and only if  it is nearly epsilon-strongly graded and  $S_g$ is a finitely generated left  $R$-module, for all $g\in G.$ 

\end{teo}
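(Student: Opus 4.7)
The plan is to pivot on the characterization in Proposition \ref{epsilon1}(ii) paired with Proposition \ref{pro1.5}(ii): both epsilon-strongly graded and nearly epsilon-strongly graded rings are symmetrically graded, so what distinguishes them is whether each $R$-ideal $S_gS_{g^{-1}}$ is merely $s$-unital or is actually unital. The theorem therefore reduces to proving that, under nearly epsilon-strong gradation, each $S_gS_{g^{-1}}$ is unital precisely when each $S_g$ is finitely generated as a left $R$-module.

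For the forward implication I would simply note that every unital bimodule is $s$-unital, so epsilon-strong implies nearly epsilon-strong, and that finite generation of ${}_RS_g$ comes for free from Proposition \ref{epsilon1}(iv).

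For the converse, fix $g\in G$ and choose finite left $R$-module generating sets $\{s_1,\ldots,s_n\}$ of $S_g$ and $\{t_1,\ldots,t_m\}$ of $S_{g^{-1}}$ (this is where both hypotheses enter). Invoking the standard fact that an $s$-unital module admits a common local unit for any finite subset, together with the bimodule $s$-unital structure of $S_g$ over $(S_gS_{g^{-1}},S_{g^{-1}}S_g)$ provided by Proposition \ref{pro1.5}(iii), I obtain $e\in S_gS_{g^{-1}}$ with $e s_i = s_i$ for every $i$; $R$-linearity then extends this to $es = s$ for all $s\in S_g$, and since any element of $S_gS_{g^{-1}}$ is a finite sum of products $xy$ with $x\in S_g$, $y\in S_{g^{-1}}$, $e$ becomes a left identity of $S_gS_{g^{-1}}$. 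Applying the symmetric argument on the right to $\{t_1,\ldots,t_m\}\subseteq S_{g^{-1}}$ (using that $S_{g^{-1}}$ is an $s$-unital right $S_gS_{g^{-1}}$-module) yields $f\in S_gS_{g^{-1}}$ acting as right identity on $S_{g^{-1}}$ and hence, by the same product expansion, on $S_gS_{g^{-1}}$. The usual identity $e = ef = f$ then produces a two-sided unit of $S_gS_{g^{-1}}$, which by Proposition \ref{epsilon1}(ii) is all that is required.

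The only delicate point will be recognizing that the hypothesis must be invoked for both $S_g$ and $S_{g^{-1}}$: finite generation of ${}_RS_g$ alone yields a left identity but no matching right identity for $S_gS_{g^{-1}}$, so the symmetric application at $g^{-1}$ is essential to upgrade the $s$-unital structure to a genuine unital one.
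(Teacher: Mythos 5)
There is a genuine gap in the left-identity half of your converse. You take a finite generating set $\{s_1,\dots,s_n\}$ of $S_g$ as a \emph{left} $R$-module, produce a common local unit $e\in S_gS_{g^{-1}}$ with $es_i=s_i$, and assert that ``$R$-linearity'' extends this to $es=s$ for all $s\in S_g$. But left multiplication by $e$ commutes with the \emph{right} action of $R$ on $S_g$, not with the left one: writing $s=\sum_i r_is_i$ with $r_i\in R$ gives $es=\sum_i (er_i)s_i$, and there is no reason for $(er_i)s_i$ to equal $r_i s_i$ unless $e$ is central in $R$. A mere local unit need not be central --- centrality of $\epsilon_g$ is a consequence of its being the identity of the ideal $S_gS_{g^{-1}}$, i.e., of the very thing you are trying to prove. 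To globalize a \emph{left} local unit in this way you would need $S_g$ to be finitely generated as a \emph{right} module, which is not among the hypotheses. Consequently the element $e$ is not shown to be a left identity, and the concluding step $e=ef=f$ collapses. By contrast, the second half of your argument is sound: right multiplication by $f$ does commute with the left $R$-action, so $t_jf=t_j$ on a left generating set of $S_{g^{-1}}$ really does give $tf=t$ for all $t\in S_{g^{-1}}$ and hence a right identity of $S_gS_{g^{-1}}$. Your strategy is in fact repairable from that half alone: in an $s$-unital ring a right identity is automatically two-sided (for $a\in S_gS_{g^{-1}}$ set $b=a-fa$, pick a left local unit $u$ for $b$, and note $uf=u$, so $b=ub=ua-ufa=0$). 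But that observation, or some substitute for it, is missing, so as written the proof does not go through.

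It is worth noting that the paper sidesteps this sidedness trap by proving condition (ii) of Proposition \ref{indcoind} instead of condition (ii) of Proposition \ref{epsilon1}: injectivity of $\eta_g'$ follows from Proposition \ref{pro1.5}(iii), while surjectivity and projectivity (via a dual basis) are obtained from exactly the combination that works, namely a left $R$-generating set of $S_{g^{-1}}$ together with a \emph{right} local unit $\epsilon'\in S_gS_{g^{-1}}$.
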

\begin{proof} By Proposition \ref{epsilon1}  follows  that every epsilon-strongly graded ring $S$  is nearly epsilon-strongly graded and $S_g$ is finitely generated, for all $g\in G.$ For the converse, take $g\in G,$ by the Proposition \ref{indcoind} it is enough to show that   $\eta_g': \,_RS_g\rightarrow \,_Rhom_R(_RS_{g^{-1}},\,_RR),$ defined by $ \eta_g'(s_g)(s'_{g\m}) =s'_{g\m}s_g ,$ where $ s_g\in S_g$ and $ s'_{g\m}\in S_{g\m}$ is an  isomorphism in $R$-mod and that $S_g$ is projective as a left $R$-module.  Take  $s\in \ker \eta_g'$ then  $S_{g^{-1}}s=\{0\}$, and by iii) of Proposition \ref{pro1.5}   we   get $0=\epsilon_g(s)s=s$,  and  $\eta_g'$ is a monomorphism. To prove the surjectivity consider  $s_1,\cdots, s_k\in S_{g\m}$ a generating set and  $\varphi\in hom_R(S_{g^{-1}},R)$.  Since $S_{g\m}$ is a  right $s$-unital $S_gS_{g^{-1}}$-module then by  \cite[Theorem 1]{T} there is $\epsilon'\in S_gS_{g^{-1}}$ such that  $s_i\epsilon'=s_i,$ for  $i=1,\cdots,k$. Write  $\epsilon'=\sum_{i=1}^ka_ib_i, a_i\in S_g, b_i\in S_{g\m}$ and take  $b\in S_{g^{-1}},$ then there are  $r_1,\cdots r_k\in R$ such that $b=\sum_{i=1}^kr_is_i$, whence $b\epsilon'=b$ and 
	$\varphi(b)=\varphi(b\epsilon')=b\sum_{i=1}^ka_i\varphi(b_i)=\eta_g'(\sum_{i=1}^ka_i\varphi(b_i))(b)$,
which gives $\eta_g'(\sum_{i=1}^ka_i\varphi(b_i))=\varphi$. To check that  $S_{g^{-1}}$ is projective define $\varphi_i: S_{g^{-1}}\rightarrow R,$ for $ 1\leq i\leq k$ such that  $s\mapsto sa_i, s\in S_{g^{-1}}$. Then $\varphi_i\in hom_R(S_{g^{-1}},R)$ and for  $s\in S_{g^{-1}}$, one has that 
	$s=s\epsilon'=\sum_{i=1}^n(sa_i)b_i=\sum_{i=1}^n\varphi_i(s)b_i,$ which shows that 
 $S_{g^{-1}}$ is a left  projective $R$-module, and we conclude that $S$ is epsilon-strongly graded. \end{proof}

\subsection{The category of symmetrically graded modules}
\begin{defi}\label{symmodules} Let $M$ be a graded left  $S$-module.
We say that $M$ is left  \emph{symmetrically graded} if the equality  $M_g = S_{(g,g\m)} M_{g}$ 
holds for each $g\in G$.

\end{defi}

We denote by  SIM$S$-gr 
the full subcategory of $S$-gr whose objects are symmetrically $G$-graded  left  $S$-modules.   For $g\in G$ denote $\C'_g=\{M\in{\rm SIM}$S$-gr\mid M_g=0 \}.$ Using this category we obtain  a new characterization of strongly graded rings. Indeed, we have the following.
\begin{teo}\label{ncar} The following assertions are equivalent.
\begin{enumerate}
\item[$(i)$] S is strongly graded;
\item[$(ii)$] S is symmetrically graded and $C'_g=\{0\},$ for all $g\in G.$
\end{enumerate}
\end{teo}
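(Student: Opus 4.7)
The plan is to prove both implications by direct module-theoretic arguments, with the reverse direction driven by exhibiting a concrete test module in $\mathcal{C}'_g$ built from $S$ itself.

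For $(i)\Rightarrow (ii)$, I would first note that strongly graded forces symmetric: $S_g = R\,S_g = S_g S_{g^{-1}} S_g$ for every $g$. Next, given any $M \in \mathcal{C}'_g$ (so in particular a graded $S$-module with $M_g=\{0\}$), the goal is to show $M=0$. The key observation is that $S_g M_e \subseteq M_g = \{0\}$, so using $R = S_{g^{-1}} S_g$ one gets $M_e = R\,M_e = S_{g^{-1}}(S_g M_e) = \{0\}$. Once $M_e = \{0\}$, for any $h\in G$ we have $S_{h^{-1}} M_h \subseteq M_e = \{0\}$, and hence $M_h = R\,M_h = S_h(S_{h^{-1}} M_h) = \{0\}$. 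Thus $M=0$, so $\mathcal{C}'_g = \{0\}$. (Symmetry of $M$ is not even needed for this direction.)

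For $(ii)\Rightarrow (i)$, fix $g\in G$ and consider the cyclic quotient $M := S / S\,S_g$. First I would check that $S\,S_g$ is a graded left ideal: since $S_k S_g \subseteq S_{kg}$ and the grading of $S$ is direct, the homogeneous component of degree $h$ of $S\,S_g$ is exactly $S_{hg^{-1}} S_g$. Therefore $M$ inherits the gradation with $M_h = S_h / S_{hg^{-1}} S_g$, and in particular $M_g = S_g / S_e S_g = S_g / S_g = \{0\}$. Now using the hypothesis that $S$ is symmetric, $S_h S_{h^{-1}} S_h = S_h$ for every $h$, so passing to the quotient yields $S_h S_{h^{-1}} M_h = M_h$; that is, $M$ is symmetrically graded. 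Hence $M \in \mathcal{C}'_g = \{0\}$, forcing $S\,S_g = S$. Reading this off in each degree gives $S_h = S_{hg^{-1}} S_g$ for all $h\in G$, and specializing $h=ab$, $g=b$ yields $S_{ab} = S_a S_b$ for arbitrary $a,b\in G$. Thus $S$ is strongly graded.

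The only place that requires care is the reverse direction, where one must pick the right quotient: $S\,S_g$ is chosen so that it is large enough to kill $S_g$ in degree $g$, small enough to preserve the grading cleanly, and compatible with the symmetric-module condition thanks to the assumed symmetry of $S$. Once this module is in hand, the rest is a bookkeeping computation of homogeneous components and a substitution to recover the strongly-graded identity $S_a S_b = S_{ab}$.
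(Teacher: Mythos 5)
Your proof is correct and follows essentially the same route as the paper: your test module $S/SS_g$ is exactly the paper's quotient $M/SM_h$ specialized to $M=S$ and $h=g$, and the verification that it is symmetrically graded with vanishing $g$-component is the same computation. The only cosmetic difference is that you prove $(i)\Rightarrow(ii)$ directly (indeed showing the stronger fact that $\C_g=\{0\}$) where the paper cites N\u{a}st\u{a}sescu--Van Oystaeyen.
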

\begin{proof}  The part $(i) \Rightarrow (ii)$ follows from \cite[Proposition 2.6.3]{nas04}  part $g.$ For $(ii) \Rightarrow (i),$ let $M$ in  {\rm SIM}$S$-gr ,  $h\in G$ and  consider the  $S$-graded module $SM_h,$ where $(SM_h)_g=S_{gh\m}M_h,$  then $SM_h$ is a graded submodule of $M,$  and  $\displaystyle\frac{M}{SM_h}$ is an object in {\rm SIM}$S$-gr. Indeed, for $g\in G$ and  $m\in M_g$, there are  $x_i\in S_g S_{g^{-1}}$  and  $y_i\in M_g$ such that $m=\sum x_iy_i$, whence  $\overline{m}=\sum x_i \overline{y_i}\in S_gS_{g^{-1}}\left(\displaystyle\frac{M}{SM_h}\right)_g$.  On the other hand, $\left(\displaystyle\frac{M}{SM_h}\right)_h=0$  which gives $\displaystyle\frac{M}{SM_h}\in C'_h$ and thus $M=SM_h,$ and the fact that $SM_h$ is a graded submodule of $M$ implies $M_{gh}=(SM_h)_{gh}=S_gM_h$  for  $g,h\in G$. In particular, letting $S=M$  we have  $S_gS_h=S_{gh}$ then $S$ is strongly graded.
\end{proof} 

Concerning the category {\rm SIM}$S$-gr  we also have the next.
\begin{pro}\label{catequiv} Suppose that $S$ is  epsilon-strongly graded. Then the  following assertions hold.

\begin{itemize}

\item[$(i)$] The  categories  {\rm SIM}$S$-gr and $R$-{\rm mod} are equivalent via the functors $()_e$ and $\ind.$
\item[$(ii)$] The set $\C'_e$ consists of the zero module.

\item[$(iii)$] A morphism $\phi\colon M\to N$ in {\rm SIM}$S$-{\rm gr} is monic, epic or iso, respectively, if and only if, its restriction $\phi\colon M_e\to N_e$ is monic, epic or iso, respectively in $R$-{\rm mod}.
\end{itemize}
\end{pro}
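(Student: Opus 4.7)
The plan is to establish $(i)$ first by producing $\ind$ and $(-)_e$ as quasi-inverse functors between {\rm SIM}$S$-{\rm gr} and $R$-mod, then to deduce $(ii)$ by a short direct argument and $(iii)$ as an abstract consequence of the equivalence.

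I would begin by verifying that $\ind$ restricts to a functor $R$-mod $\to$ {\rm SIM}$S$-{\rm gr}. For $N\in R$-mod and $g\in G$, we have $\ind(N)_g=S_g\otimes N$, and since $S$ is epsilon-strongly graded, by Proposition \ref{epsilon1}$(iii)$ there is $\epsilon_g\in S_gS_{g\m}$ with $\epsilon_g s=s$ for every $s\in S_g$, so $S_{(g,g\m)}S_g=S_g$ and hence $S_{(g,g\m)}\ind(N)_g=\ind(N)_g$. Next, the map $\mu_N\colon (\ind N)_e\to N$, $s\otimes n\mapsto sn$, is a natural $R$-isomorphism because $(\ind N)_e=S_e\otimes_R N=R\otimes_R N$. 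For the opposite composition I define $\tau_M\colon \ind(M_e)\to M$ by multiplication. Its $g$-th component factors as the multiplication map $m_{g,e}\colon S_g\otimes M_e\to S_{(g,g\m)}M_g$ of Theorem \ref{teo1.1}$(iii)$, which is an isomorphism (the hypothesis applies because epsilon-strong implies nearly epsilon-strong), and $S_{(g,g\m)}M_g=M_g$ precisely by the symmetric gradation of $M$. Naturality of both transformations is transparent from the definitions, yielding the equivalence.

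For $(ii)$, take $M\in\C'_e$, so $M_e=\{0\}$ and $M$ is symmetrically graded. For each $g\in G$ one has $S_{g\m}M_g\subseteq M_{g\m g}=M_e=\{0\}$, whence $S_{(g,g\m)}M_g=S_gS_{g\m}M_g=\{0\}$; symmetry then forces $M_g=S_{(g,g\m)}M_g=\{0\}$, so $M=\{0\}$. For $(iii)$, the argument is formal: by $(i)$ the functor $(-)_e$ is an equivalence of categories, hence it both preserves and reflects monomorphisms, epimorphisms and isomorphisms, so any of those properties holds for $\phi\colon M\to N$ in {\rm SIM}$S$-{\rm gr} if and only if it holds for its restriction $\phi_e\colon M_e\to N_e$ in $R$-mod.

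The most delicate step is the surjectivity of $\tau_M$ in each degree: this is where the symmetric condition on $M$ is indispensable, for without it Theorem \ref{teo1.1}$(iii)$ only identifies $S_g\otimes M_e$ with the possibly proper submodule $S_{(g,g\m)}M_g$ of $M_g$. Apart from this point, and the equally easy check that $\ind(N)$ is itself symmetrically graded (which uses the idempotents $\epsilon_g$ from Proposition \ref{epsilon1}), both verifications are routine bookkeeping.
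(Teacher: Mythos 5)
Your proof is correct. For part $(i)$ you follow essentially the paper's own route: the paper invokes Theorem \ref{teo1.1} to obtain the natural isomorphism $\tau_M\colon S\ot M_e\to S(M)$ and observes that $S(M)=M$ when $M$ is symmetrically graded, which is precisely your degreewise argument via the isomorphisms $m_{g,e}$ combined with $S_{(g,g\m)}M_g=M_g$; likewise your treatment of $(iii)$ as a formal consequence of the equivalence (equivalences preserve and reflect monos, epis and isos) is exactly what the paper does by deferring to \cite[Corollary 2.10]{D}. The genuine divergence is in $(ii)$: the paper argues that $M\cong\ind(M_e)\simeq\coind(M_e)$ by Proposition \ref{indcoind}$(iii)$ and concludes $t_{\C_e}(M)=0$ from the $\C_e$-torsion-freeness of coinduced modules, whereas you compute directly that $M_e=\{0\}$ forces $M_g=S_gS_{g\m}M_g\subseteq S_gM_e=\{0\}$. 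Your argument is shorter, avoids the coinduction functor and the torsion class entirely, and in fact uses only that $M$ is symmetrically graded, so it actually proves $\C'_e=\{0\}$ over an arbitrary graded ring $S$, not merely an epsilon-strongly graded one. What the paper's route buys instead is a tighter link to the $\ind\simeq\coind$ characterization of epsilon-strong gradings, which is the thread the surrounding section is developing; both proofs are valid.
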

\begin{proof}(i) Suppose that $S$ is epsilon-strongly graded.  Then the restriction of the functor $S(id)$ to the category  {\rm SIM}$S$-gr is the identity functor. By  Proposition  \ref{teo1.1}, it is enough to show that $S\ot ()_e $ induces an  endofunctor of the category  {\rm SIM}$S$-gr, but  this follows from the fact that $S$ is symmetrically graded. To show (ii), we have by 
  take $M$ in  SIM$S$-gr then there exists  $N$ in $R$-mod  such that $M$ and  $\ind(N)$ are isomorphic as graded modules.  On the other hand, by   (iii) of Proposition \ref{indcoind}  we have   $\ind(N)\simeq \coind(N)$ then $t_{\C_e}(M)=t_{\C_e}( \coind(N))=0,$ and thus $C'_e=0.$
For (iii), since $\ep_e=1_S,$ then using  (i)  the  proof follows exactly as in \cite[Corollary 2.10]{D}. 
\end{proof}

\section{Strongly graded rings inside  epsilon-strongly graded rings} \label{epfort}
Now we study epsilon-strongly graded rings from another perspective, we establish  some conditions to write them as a direct sum of strongly graded rings and a ring with induced trivial gradation.  Some of the ideas presented here are inspired by the works  \cite{KuoSz} and \cite{KuoSzII}.

{\it From now on, in this work  $S$ will be epsilon-strongly graded by $G,$ and $\ep_g$  will be  the identity element of $S_gS_{g\m},$ for every $g\in G,$ in particular we set $\epsilon_e=1_S.$}


\begin{rem}\label{definitiongamma}
Take $g \in G$.
From the relation $\epsilon_g \in S_g S_{g^{-1}}$ 
it follows that there is $n_g \in \mathbb{N}$,
and $u_g^{(i)} \in S_g$ and $v_{g^{-1}}^{(i)} \in S_{g^{-1}}$,
for $i \in \{1,\ldots,n_g\}$,
such that $\sum_{i=1}^{n_g} u_g^{(i)} v_{g^{-1}}^{(i)} = \epsilon_g$.
Unless otherwise stated, the elements 
$u_g^{(i)}$ and $v_{g^{-1}}^{(i)}$ are fixed.
We also assume that $n_e = 1$ and $u_e^{(1)} = v_e^{(1)} = 1$.
Define the additive function 
\begin{equation}\label{defgam}\gamma_g : S\ni s \mapsto \sum_{i=1}^{n_g} u_g^{(i)} s v_{g^{-1}}^{(i)}\in S .\end{equation} 
It is observed in \cite[Remark 14]{NYOP} that the collection of (restriction) maps $ \gamma=\{\gamma_g : Z(R)\epsilon_{g\m} \to Z(R)\epsilon_{g}\}_{g \in G},$ yields a partial action of $G$ on $Z(R)$ (see \cite[Definition 1.1]{DE}) . In particular, for $g,h\in G$  we have by  \cite[(5) p.1939]{DE} that 
\begin{equation}\label{idemps}\gamma_g(\epsilon_{h}\epsilon_{g\m})=\epsilon_{gh}\epsilon_{g}.\end{equation}

Moreover by  \cite[Proposition 13]{NYOP} we know that 
\begin{equation}\label{centra}\gamma_g(r)s_g=s_gr,\end{equation} for all $r\in Z(R)$ and $s_g\in S_g, g\in G.$

\end{rem}

\begin{defi}
Let  $H\subseteq G,$ an element  $r \in Z(R)$ is said to be  $\gamma_H-$invariant, if  $\gamma_h(r\epsilon_{h^{-1}})=r\epsilon_h$ for any  $h\in H$. In particular, if $r$ es $\gamma_G-$invariant, we say that  $r$ is $\gamma$-invariant.
\end{defi}

\begin{Notac}

\begin{enumerate}
    \item [$(i)$] For  $X\subseteq G$  we set $\mathcal{E}_X:=\{\epsilon_x:\ x\in X\}$ and $B(\mathcal{E}_X)$ the boolean semigroup generated  $\mathcal{E}_X$. Further, we let $B(\mathcal{E}_X)^*:=B(\mathcal{E}_X)\setminus\{0\}$. There is a partial order in  $B(\mathcal{E}_X)$ defined by:
    $a\leq b$ if and only if  $a=ab$ for all  $a,b\in B(\mathcal{E}_X)$.

    \item [$(ii)$] For  $r\in B(\mathcal{E}_G)^*$, we set 
$N(r):=\{g\in G: r\epsilon_g=r\}=\{g\in G: r\leq \epsilon_g\}$.  Notice that    $e\in N(r)$ and 
 $r\in B(\mathcal{E}_G)^*$  is minimal if and only if  
\begin{equation}\label{min}N(r)=\{g\in G: \epsilon_gr\neq 0\}.\end{equation}
\end{enumerate}
\end{Notac}

Now we obtain a similar result to \cite[Proposition 5]{KuoSzII} but the proof of our result is different since we are not assuming that the group $G$ is finite.
\begin{pro}\label{lem2.1}
Let $r\in B(\mathcal{E}_{G})^*$ a minimal element. Then the  following assertions are equivalent :
\begin{enumerate}
    \item [$(i)$] $r$ is $\gamma$-invariant.
    \item [$(ii)$] $r\in Z(S)$.
 \item [$(iii)$] $N(r)$ is a subgroup of  $G.$
\end{enumerate}
\end{pro}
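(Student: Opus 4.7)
My plan is to prove the cyclic chain of implications $(ii)\Rightarrow(i)\Rightarrow(iii)\Rightarrow(ii)$. The two tools I will lean on throughout are equation~\eqref{centra}, which relates $\gamma_g\colon Z(R)\epsilon_{g^{-1}}\to Z(R)\epsilon_g$ to multiplication by homogeneous elements, and equation~\eqref{idemps}, which transports products of the distinguished idempotents across the partial action. The minimality of $r$ in $B(\mathcal{E}_G)^{*}$, encoded by \eqref{min}, will be used only to enter or leave (iii); the equivalence $(i)\Leftrightarrow(ii)$ in fact holds for any $r\in Z(R)$.

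For $(ii)\Rightarrow(i)$, no minimality is required. For every $s_g\in S_g$ the identities $rs_g=r\epsilon_g s_g$ (since $\epsilon_g s_g=s_g$) and $s_g r=\gamma_g(r\epsilon_{g^{-1}})s_g$ (by \eqref{centra}) combine via the centrality $rs_g=s_gr$ to yield $\bigl(r\epsilon_g-\gamma_g(r\epsilon_{g^{-1}})\bigr)S_g=\{0\}$. Multiplying on the right by $S_{g^{-1}}$ and using that $\gamma_g(r\epsilon_{g^{-1}})\in Z(R)\epsilon_g$ forces $r\epsilon_g=\gamma_g(r\epsilon_{g^{-1}})$, which is exactly $\gamma$-invariance.

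For $(i)\Rightarrow(iii)$, take $g\in N(r)$. Then $\gamma_g(r\epsilon_{g^{-1}})=r\epsilon_g=r\neq 0$, so injectivity of $\gamma_g$ gives $r\epsilon_{g^{-1}}\neq 0$, and \eqref{min} forces $g^{-1}\in N(r)$. For closure under products, let $g,h\in N(r)$, so by the previous step $r\epsilon_{g^{-1}}=r\epsilon_h=r$. Combining \eqref{idemps}, the multiplicativity of $\gamma_g$ on $Z(R)\epsilon_{g^{-1}}$, and $\gamma$-invariance, I compute
\[r\epsilon_{gh}=r\epsilon_{gh}\epsilon_g=r\,\gamma_g(\epsilon_h\epsilon_{g^{-1}})=\gamma_g(r\epsilon_{g^{-1}})\gamma_g(\epsilon_h\epsilon_{g^{-1}})=\gamma_g(r\epsilon_h\epsilon_{g^{-1}})=\gamma_g(r)=r\neq 0,\]
so $gh\in N(r)$ by minimality.

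The implication $(iii)\Rightarrow(ii)$ is the most delicate; the main obstacle, and the only step where I really exploit the combinatorics of $B(\mathcal{E}_G)^{*}$, is the lemma that $g\in N(r)$ implies $\gamma_g(r)=r$. Granting this, commutation $rs_g=s_gr$ splits into two cases: if $g\notin N(r)$ then $g^{-1}\notin N(r)$ by the subgroup hypothesis, so $r\epsilon_g=r\epsilon_{g^{-1}}=0$ by \eqref{min} and both sides vanish; if $g\in N(r)$, then \eqref{centra} gives $s_gr=\gamma_g(r)s_g=rs_g$. To prove the lemma, write $r=\prod_j\epsilon_{h_j}$; minimality forces each $h_j\in N(r)$, and since $r=r\epsilon_{g^{-1}}$, a factor-by-factor application of \eqref{idemps} to $\prod_j(\epsilon_{h_j}\epsilon_{g^{-1}})$ produces $\gamma_g(r)=\epsilon_g\prod_j\epsilon_{gh_j}$. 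Because $N(r)$ is a subgroup containing $g$ and the $h_j$, each factor on the right dominates $r$, whence $\gamma_g(r)r=r$. The symmetric identity $\gamma_{g^{-1}}(r)r=r$, valid because $g^{-1}\in N(r)$, pushed forward by $\gamma_g$ via $\gamma_g\circ\gamma_{g^{-1}}=\mathrm{id}_{Z(R)\epsilon_g}$, yields $r\gamma_g(r)=\gamma_g(r)$. The two opposite domination relations together force $\gamma_g(r)=r$, completing the cycle.
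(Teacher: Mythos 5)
Your proof is correct and rests on the same two pillars as the paper's argument --- equation \eqref{centra} to pass between centrality and $\gamma$-invariance, and equation \eqref{idemps} applied to a decomposition $r=\prod_j\epsilon_{h_j}$ to obtain the subgroup condition --- so it is essentially the same proof, merely reorganized as the single cycle $(ii)\Rightarrow(i)\Rightarrow(iii)\Rightarrow(ii)$ in place of the paper's $(i)\Leftrightarrow(ii)$, $(ii)\Rightarrow(iii)$, $(iii)\Rightarrow(i)$. The only real local variation is your use of the injectivity of the partial-action maps $\gamma_g$ to get closure of $N(r)$ under inverses, where the paper instead uses centrality of $r$ to deduce $rS_gS_{g^{-1}}=S_grS_{g^{-1}}=\{0\}$ from $rS_{g^{-1}}=\{0\}$; both routes are valid.
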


\begin{proof} 
We start with  $(i)\Rightarrow (ii).$ Suppose that $r$ is $\gamma$-invariant, it is enough to show that $r$ commutes with any homogeneous element in $S.$ Take $s_g\in S_g, g\in G$ then $rs_g= r\ep_gs_g=\gamma_g(r\ep_{g\m})s_g\stackrel{\eqref{centra}}=s_gr\ep_{g\m}=s_gr$
and $r$ is central.  To prove $(ii)\Rightarrow (i).$ Take $g\in G$ then 
$$\gamma_g(r\ep_{g\m})\stackrel{\eqref{defgam}}=\sum_{i=1}^{n_g} u_g^{(i)} r \ep_{g\m}v_{g^{-1}}^{(i)}=r\gamma_g(\ep_{g\m})=r\ep_g,$$ and $r$ is $\gamma$-invariant. To show $(ii)\Rightarrow (iii).$  Take $g,h\in N(r),$ hence $\ep_gr=\ep_hr=r.$ If $g^{-1}\notin N(r)$ then $0\stackrel{\ref{min}}=r\ep_{g\m},$ which gives $rS_{g\m}=0$ and the fact that $r$ is central implies $rS_gS_{g\m}=0$ and thus $r\ep_g=0,$ which contradicts $g\in N(r).$
Now since   $r$ is $\gamma-$invariant  we have \begin{center}
    $r=r\epsilon_g=\gamma_g(r\epsilon_{g^{-1}})=\gamma_g(r\epsilon_{g^{-1}}\epsilon_h)=r\epsilon_g\epsilon_{gh}=r\epsilon_{gh}$,
\end{center}
and we conclude that $N(r)$ is a subgroup of $G.$ Now suppose that $r$ is minimal. We check   $(iii)\Rightarrow (i),$ for this  take   $g\in G$. If $g\notin N(r)$, then  $g^{-1}\notin N(r)$ and  $\gamma_g(r\epsilon_{g^{-1}})=\gamma_g(0)=0=r\epsilon_g$. Suppose $g\in N(r)$, and take  and  $h_1,h_2,\cdots,h_n\in N(r), n\in\mathbb{N}$ such that  $r=\prod_{i=1}^n\epsilon_{h_i}$.  Since  $g^{-1}\in N(r)$, we have  $g^{-1}h_i\in N(r)$ and  $r=r\epsilon_{g^{-1}h_i}$, for all  $i=1,2,\cdots,n$. 

Then
\begin{align*}
    \gamma_g(r\epsilon_{g^{-1}})&
\stackrel{\eqref{idemps}}=\prod_{i=1}^n\epsilon_{gh_i}\epsilon_g\prod_{i=1}^n\epsilon_{h_i}\epsilon_g=
\prod_{i=1}^n\epsilon_{gh_i}\epsilon_gr\epsilon_g
=\prod_{i=1}^n\epsilon_{gh_i}r\epsilon_g=r\epsilon_g,
\end{align*}
and $r$ is $\gamma$-invariant which gives $(i).$
\end{proof}
%
%
%
%

\begin{defi} Let $r\in B(\mathcal{E}_G)^*$ a minimal element. Then $r$ is called epsilon-central  if satisfies one of the  assertions in Proposition 
\ref{lem2.1}.  
\end{defi}
\begin{lem} \label{lem2.2}
Let  $r, s\in B(\mathcal{E}_G)^*$ be epsilon-central elements. Then
\begin{enumerate}
    \item [$(i)$] $(rS_g)(rS_h)=rS_{gh}$,  for any $g,h\in N(r)$. In particular, $rS=(rS)_{N(r)}=\bigoplus_{g\in N(r)} rS_g$ is strongly $N(r)$-graded.
    \item [$(ii)$] $(rS)_N\oplus (sS)_N$  is strongly $N$-graded, where $N:=N(r)\cap N(s)$.
\end{enumerate}
\end{lem}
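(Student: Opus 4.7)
The plan is to split along parts (i) and (ii). For (i), I would first pin down which homogeneous components of $rS$ are nonzero. Since $r$ is epsilon-central, Proposition~\ref{lem2.1} gives $r \in Z(S)$, and the minimality characterization \eqref{min} forces $\epsilon_g r = 0$ whenever $g \notin N(r)$, so $rS_g = r \epsilon_g S_g = 0$ outside $N(r)$. This immediately yields $rS = \bigoplus_{g \in N(r)} rS_g$.

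For the strong grading, I would establish $(rS_g)(rS_h) = rS_{gh}$ for $g,h \in N(r)$ in two steps. First, the centrality of $r$ together with $r^2 = r$ gives $(rS_g)(rS_h) = rS_gS_h$: any product $(rx)(ry)$ equals $r^2xy = rxy$, and conversely $rxy = (rx)(ry)$. The inclusion $rS_gS_h \subseteq rS_{gh}$ is then immediate from $S_gS_h \subseteq S_{gh}$. The reverse inclusion is the step that genuinely uses the epsilon-strong structure: using $r = r\epsilon_g$ (valid since $g \in N(r)$) and the factorization $\epsilon_g = \sum_i u_g^{(i)} v_{g^{-1}}^{(i)}$ from Remark~\ref{definitiongamma}, I would compute
\[
rS_{gh} = r\epsilon_g S_{gh} = \sum_i (r u_g^{(i)})(v_{g^{-1}}^{(i)} S_{gh}) \subseteq rS_g \cdot S_{g^{-1}}S_{gh} \subseteq rS_g S_h,
\]
where the final inclusion uses $S_{g^{-1}}S_{gh} \subseteq S_h$.

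For (ii), if $r = s$ the statement collapses to (i). Otherwise, the preliminary step is $rs = 0$: the product $rs$ lies in $B(\mathcal{E}_G)$ and satisfies $rs \leq r$ and $rs \leq s$, so minimality of $r$ and $s$ forces $rs \in \{0, r, s\}$, and $rs = r$ or $rs = s$ would yield $r = s$, contradicting our assumption. Once $rs = 0$, the pieces $rS_g$ and $sS_g$ intersect trivially (multiply any common element by $r$), so $(rS)_N + (sS)_N$ is a genuine internal direct sum with $g$-component $rS_g \oplus sS_g$. To verify the strong $N$-grading I would expand $(rS_g \oplus sS_g)(rS_h \oplus sS_h)$; the cross products $(rS_g)(sS_h) = rs\, S_gS_h = 0$ and $(sS_g)(rS_h) = 0$ vanish, while part (i) gives $(rS_g)(rS_h) = rS_{gh}$ and $(sS_g)(sS_h) = sS_{gh}$, so the total product equals $rS_{gh} \oplus sS_{gh}$, the desired $gh$-component. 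The main obstacle is the reverse inclusion $rS_{gh} \subseteq rS_gS_h$ in (i), which is the only step that genuinely invokes the epsilon-strong hypothesis; the rest is routine bookkeeping with orthogonal central idempotents.
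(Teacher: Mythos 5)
Your proof is correct and follows essentially the same route as the paper: the key inclusion $rS_{gh}=r\epsilon_g S_{gh}\subseteq rS_gS_{g^{-1}}S_{gh}\subseteq rS_gS_h=(rS_g)(rS_h)$ is exactly the paper's computation, and your part (ii) is the paper's argument with the details filled in. The only difference is that you justify $rs=0$ via minimality and spell out the componentwise product, steps the paper states without proof.
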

\begin{proof}
(i) The fact that $r$ is a central homogeneous idempotent implies that $rS$ is a graded ring with identity $r,$ the minimality of $r$ in $B(\mathcal{E}_G)^*$ gives $r\ep_g=0,$ for all $g\notin N(r)$ and thus  $rS=(rS)_{N(r)}.$ To check that $(rS)_{N(r)}$ is $N(r)$-strongly graded take  $g,h\in N(r)$, then  \begin{center}
     $rS_{gh}=r\epsilon_gS_{gh}\subseteq rS_gS_{g^{-1}}S_{gh}\subseteq rS_gS_h=(rS_g)(rS_h)$.
\end{center}
For  $(ii)$  notice that  $rs=0,$ whence $r+s$ is the identity of  $(rS)_N\oplus (sS)_N.$ Now the fact that it is  $N$-strongly graded is clear. 
\end{proof}

\begin{ejem}\label{ejem2.2}
Let  $A$ be   a commutative ring with a non-zero identity $1_A$ and $B$ a unital ideal of $A$ such that  $1_B\neq 1_A$. Put
\begin{displaymath}
S =  
\left( 
\begin{matrix}
  A & A & B \\
  A & A & B \\
  B & B & A
 \end{matrix}
\right),
\quad
R =  
\left( 
\begin{matrix}
  A & A & 0 \\
  A & A & 0 \\
  0 & 0 & A
 \end{matrix}
\right)
\,\, \text{and} \,\,
T =  
\left( 
\begin{matrix}
  0 & 0 & B \\
  0 & 0 & B \\
  B & B & 0
 \end{matrix}
\right).
\end{displaymath}
Then  by \cite[Proposition 40]{NYOP}  $S$   is an epsilon-strongly   $\mathbb{Z}_2$-graded ring with
$S_0 = R, S_1 = T, \epsilon_{0}=diag(1_A,1_A,1_A)$ and $\ep_{1}=diag(1_B,1_B,1_B)$. Moreover,  $B(\E_G)=\{\ep_{0},\ep_{1}\}$ and $N(\ep_{1})=\mathbb{Z}_2$. Then by   Lemma \ref{lem2.2} one gets that  $\ep_{1}S$ is strongly  $\mathbb{Z}_2$-graded.  Furthermore, $\ep_{1}S=\ep_{1}R\oplus \ep_{1}T$, and 

\begin{displaymath}
\ep_1S=  
\left( 
\begin{matrix}
  B & B & B \\
  B & B & B \\
  B & B & B
 \end{matrix}\right), \quad
\ep_1R =  
\left( 
\begin{matrix}
  B & B & 0 \\
  B & B & 0 \\
  0 & 0 & B
 \end{matrix}
\right)
\,\, \text{ y } \,\,
\ep_1T =  
\left( 
\begin{matrix}
  0 & 0 & B \\
  0 & 0 & B \\
  B & B & 0
 \end{matrix}
\right).
\end{displaymath}
Notice that $\ep_1S$ is  the first known 
example (due to E. Dade, according to \cite[Example 2.9]{dascalescu1999})
of a strongly graded ring which is not a crossed product.
\end{ejem} 
%
%


\begin{pro}\label{pro2.2}
Suppose that  $\{e_1,\cdots, e_k\}\subseteq B(\E_G)^*$  is the set of epsilon-central elements. Then $S=\oplus_{i=1}^k e_iS\oplus e'S$,  where $e'=1_S-\sum_{j=1}^ke_j$. Moreover:\begin{enumerate}
    \item [$(i)$] For   $i=1,2,\cdots,k$ the ring $e_iS$ is strongly $N(e_i)$-graded, and  it is indecomposable into a direct sum of strongly graded rings whose unities belong to $ B(\E_G)^*.$
    \item [$(ii)$] If  $e'=0$, then  $S_N$ is  strongly $N$-graded, where $N=\bigcap_{i=1}^kN(e_i)$.
    \item [$(iii)$] If $e'\neq0$, then  $e'S$  is epsilon-strongly $G$-graded.
\end{enumerate} 
\end{pro}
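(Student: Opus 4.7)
The plan is to first establish the direct sum decomposition $S = \bigoplus_{i=1}^k e_iS \oplus e'S$. By Proposition \ref{lem2.1}, each epsilon-central $e_i$ lies in $Z(S)$, so it suffices to show that the $e_i$ are pairwise orthogonal. If $e_i \neq e_j$ and $e_ie_j \neq 0$, then $e_ie_j$ is a nonzero element of $B(\mathcal{E}_G)^*$ satisfying $e_ie_j \leq e_i$ and $e_ie_j \leq e_j$; minimality of both $e_i$ and $e_j$ then forces $e_i = e_ie_j = e_j$, a contradiction. Consequently $\sum_i e_i$ is a central idempotent, $e'$ is its complementary central idempotent, and the decomposition of $S$ as a direct sum of two-sided ideals follows.

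For (i), the fact that $e_iS$ is strongly $N(e_i)$-graded is a direct application of Lemma \ref{lem2.2}(i), noting that $N(e_i)$ is a subgroup of $G$ by Proposition \ref{lem2.1}. For the indecomposability statement, suppose $e_iS = \bigoplus_j f_jS$ is a nontrivial decomposition into strongly graded summands whose unities $f_j$ belong to $B(\mathcal{E}_G)^*$. Each $f_j$ is a nonzero central idempotent with $f_j = f_je_i$, that is, $f_j \leq e_i$. By minimality of $e_i$ we then have $f_j = e_i$ for every $j$; since the $f_j$ are pairwise orthogonal, only one summand can be nonzero, contradicting the assumption.

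For (ii), assuming $e'=0$ so that $1_S = \sum_i e_i$, fix $g,h\in N := \bigcap_i N(e_i)$. Decomposing $S_{gh} = \sum_i e_iS_{gh}$ and invoking Lemma \ref{lem2.2}(i) inside each $e_iS$ (which applies because $g,h\in N(e_i)$), I obtain $e_iS_{gh} = (e_iS_g)(e_iS_h)\subseteq S_gS_h$. Summing over $i$ gives $S_{gh}\subseteq S_gS_h$, while the reverse inclusion is immediate from the grading, so $S_gS_h = S_{gh}$ and $S_N$ is strongly $N$-graded.

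For (iii), the ring $e'S$ inherits a $G$-grading with $(e'S)_g = e'S_g$ and identity $e'$. I will verify the criterion of Proposition \ref{epsilon1}(iii) taking $\epsilon'_g := e'\epsilon_g$. Centrality and idempotency of $e'$ ensure $\epsilon'_g \in e'S_gS_{g^{-1}} = (e'S_g)(e'S_{g^{-1}})$, and for every $s\in e'S_g$ one computes $\epsilon'_g s = e'\epsilon_g s = e's = s$ and $s\epsilon'_{g^{-1}} = se'\epsilon_{g^{-1}} = s$, using the corresponding identities for $\epsilon_g$. The genuinely delicate point throughout is the orthogonality argument in the first paragraph, where the minimality characterization \eqref{min} must be used carefully to handle products of distinct epsilon-central elements; once this is in place, the remaining assertions amount to bookkeeping with Lemma \ref{lem2.2} and Proposition \ref{epsilon1}.
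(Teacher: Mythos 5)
Your proof is correct and follows essentially the same route as the paper's: decompose $S$ via the orthogonal central idempotents $e_1,\dots,e_k,e'$, apply Lemma \ref{lem2.2} for (i) and (ii), and verify condition (iii) of Proposition \ref{epsilon1} with $\epsilon'_g=e'\epsilon_g$ for (iii). You fill in details the paper leaves implicit (the pairwise orthogonality of the $e_i$ via minimality, the indecomposability argument, and the direct computation $S_{gh}=\sum_i e_iS_{gh}\subseteq S_gS_h$ in place of citing Lemma \ref{lem2.2}(ii)), but the underlying approach is identical.
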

\begin{proof} To prove (i) we notice that
the fact that   $e_i$ for $1\leq i\leq k$ is epsilon-central, implies that $\{e_1,e_2,\cdots,e_k,e'\}$ is a family of orthogonal  and central idempotents whose sum is $1_S.$  Then $S=\bigoplus_{j=1}^ke_jS\oplus e'S$, moreover the last assertion in (i) follows from the minimality of the $e_i's.$  For (ii)  we have by the first item of  Lemma \ref{lem2.2} that  $e_iS$ is strongly  $N(e_i)$-graded, and if  $e'=0$, then the second item of  Lemma \ref{lem2.2} implies that $S$ is strongly $N$-graded. Now we check (iii), suppose  $e'\neq 0$ 
and let  $\epsilon'_g=e'\epsilon_g, g\in G$, then  
$\epsilon'_g=e'\epsilon_ge'\in e'S_gS_{g^{-1}}e'\subseteq (e'S_g)(e'S_{g^{-1}}),$ and for $s_g\in S_g$ we have that $\epsilon_g'(e's_g)=e'\epsilon_gs_g=e's_g=e's_g\epsilon_{g^{-1}}=e's_ge'\epsilon_{g^{-1}}=(e's_g)\epsilon'_{g^{-1}}, $ and (iii) of Proposition \ref{epsilon1} implies that $e'S$ is epsilon-strongly $G$-graded, which ends the proof.
\end{proof}
%

\begin{ejem}
Let  $S$ be the ring defined in Example \ref{ejem2.2}. Then  $\epsilon_1$ is the only epsilon-central element in $B(\E_{\Z_2})^*$ and by  Proposition \ref{pro2.2} we have  $S=\epsilon_{1}S\oplus (1_S-\epsilon_{1})S$, where  $(1_S-\epsilon_{1})S$ is epsilon-strongly $\mathbb{Z}_2$-graded. But by the proof of Proposition \ref{pro2.2} we obtain   
$\epsilon_{1}'=(1-\epsilon_{1})\epsilon_{1}=0,$ then  $(1_S-\epsilon_{1})S_1=\{0\}$ and we get that $(1_S-\epsilon_{1})S$ has induced trivial gradation.   
\end{ejem}

Adapting \cite[Definition 36]{NyOP3} to the group graded case we have the following.
\begin{defi} We say that a graded  $S$-module $M$ is epsilon-strongly graded if for all $g\in G,$ $S_gS_{g\m}$ is a unital ideal of $S$ and $S_gM_h=S_gS_{g\m}M_{gh}, g,h\in G.$ 
\end{defi}

\begin{coro}Let  $S$  be an  epsilon-strongly  $G$-graded ring. Suppose that  $\{e_1,\cdots, e_k\}\subseteq B(\E_G)^*$  is the set of epsilon-central elements and let $e'=1_S-\sum_{j=1}^ke_j.$ Then for any graded $S$-module $M$ we have $M=\bigoplus\limits_{i=1}^k e_iM\oplus e'M$.  Moreover:
\begin{enumerate}
    \item [$(i)$] For   $i=1,2,\cdots,k$ the set  $(e_iM)_{N(e_i)}$ is  a strongly $N(e_i)$-graded $e_iS$-module  and  it is indecomposable into a direct sum of strongly graded rings whose unities belong to $ B(\E_G)^*.$
    \item [$(ii)$] If  $e'=0$, then  $M_N$ is a  strongly $N$-graded $S_N$-module, where $N=\bigcap_{i=1}^kN(e_i)$.
    \item [$(iii)$] If $e'\neq0$, then  $e'M$  is  an epsilon-strongly $G$-graded $e'S$-module.
\end{enumerate} 
\end{coro}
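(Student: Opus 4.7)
The plan is to mirror the proof of Proposition~\ref{pro2.2} at the level of modules, using that $\{e_1,\dots,e_k,e'\}$ is already known, from that proof, to be a complete family of orthogonal central idempotents of $S$ summing to $1_S$. Applying these idempotents to $M$ yields the internal direct sum $M=\bigoplus_{i=1}^{k}e_iM\oplus e'M$ immediately, with no further argument. The central technical identity I would record up front is that, for any graded $S$-module $M$ over the epsilon-strongly graded ring $S$ and all $g,h\in G$, one has
\begin{equation*}
S_gM_h=S_gS_{g^{-1}}M_{gh},
\end{equation*}
a direct consequence of the symmetric grading identity $S_g=S_gS_{g^{-1}}S_g$ together with the evident inclusion $S_{g^{-1}}M_{gh}\subseteq M_h$.

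For part (i), I would verify the strong grading condition on $(e_iM)_{N(e_i)}$ regarded as an $(e_iS)$-module directly. Take $g,h\in N(e_i)$, so that $e_i\ep_g=e_i$ and hence $e_iM_{gh}=e_i\ep_gM_{gh}$. The containment $\ep_g\in S_gS_{g^{-1}}$, the preliminary identity, and the centrality of $e_i$ then give the chain
\begin{equation*}
e_iM_{gh}=e_i\ep_gM_{gh}\subseteq e_iS_gS_{g^{-1}}M_{gh}=e_iS_gM_h=(e_iS_g)(e_iM_h)\subseteq e_iM_{gh},
\end{equation*}
forcing equality throughout. The indecomposability assertion is inherited from the ring-level minimality of $e_i$ already proved in Proposition~\ref{pro2.2}(i).

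For part (ii), assume $e'=0$, so that $\sum_i e_i=1_S$. For $g\in N=\bigcap_i N(e_i)$ one has $e_i\ep_g=e_i$ for every $i$; summing yields $\ep_g=1_S$, equivalently $S_gS_{g^{-1}}=S$. Hence, for $g,h\in N$, the preliminary identity gives $M_{gh}=\ep_gM_{gh}=S_gS_{g^{-1}}M_{gh}=S_gM_h$, the strong grading condition for $M_N$ over $S_N$. For part (iii), with $e'\neq 0$, Proposition~\ref{pro2.2}(iii) already ensures that $(e'S)_g(e'S)_{g^{-1}}=e'S_gS_{g^{-1}}$ is a unital ideal of $e'S$ with identity $e'\ep_g$, and multiplying the preliminary identity by the central idempotent $e'$ yields $(e'S)_g(e'M)_h=e'S_gM_h=e'S_gS_{g^{-1}}M_{gh}=(e'S)_g(e'S)_{g^{-1}}(e'M)_{gh}$, the required epsilon-strong module condition.

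The whole proof is essentially bookkeeping built on the identity $S_gM_h=S_gS_{g^{-1}}M_{gh}$ together with the central-idempotent structure already settled in Proposition~\ref{pro2.2}. I do not anticipate any deeper obstacle; the point that requires care is to keep distinct the weak identity $e_i\ep_g=e_i$ (which encodes only $g\in N(e_i)$ and drives (i)) from the stronger assertion that $\ep_g$ acts as identity on all of $M_{gh}$, which need not hold in general but does follow in (ii) from $\sum_ie_i=1_S$.
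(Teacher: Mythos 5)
Your argument is correct, but it takes a more self-contained route than the paper, which disposes of the corollary almost entirely by citation: the paper obtains the decomposition $M=\bigoplus_{i=1}^k e_iM\oplus e'M$ from the orthogonality of the minimal idempotents exactly as you do, but then derives (i) and (ii) from Proposition \ref{pro2.2} together with Dade's theorem \cite[(2.9 b) Theorem 2.8]{D} (which asserts $S_gM_h=M_{gh}$ for any graded module over a strongly graded ring), and (iii) from Proposition \ref{pro2.2}(iii) together with \cite[Proposition 47]{NyOP3}. You instead unpack these citations into explicit computations built on the identity $S_gM_h=S_gS_{g\m}M_{gh}$ — which, note, is precisely condition (ii) of Theorem \ref{teo1.1} and could have been quoted rather than rederived — and your chains of inclusions for (i), (ii) and (iii) are exactly the computations hiding behind the cited results, so nothing is lost; what your version buys is independence from the external references and a transparent view of where the hypotheses $g,h\in N(e_i)$, $e'=0$ and $e'\neq 0$ enter. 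Two minor points: in (ii) the conclusion from $\ep_g=1_S$ should read $S_gS_{g\m}=R$ (it is an ideal of $R=S_e$, not of $S$), though the ensuing computation is unaffected; and your closing caveat distinguishing $e_i\ep_g=e_i$ from $\ep_g$ acting as the identity on $M_{gh}$ is well taken, since a general graded module need not be symmetrically graded in the sense of Definition \ref{symmodules}.
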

\begin{proof}Let $M$ be an object in $S$-gr, then the minimality of the idempotents $\{e_1,\cdots, e_k\}$ implies that $M=\bigoplus\limits_{i=1}^k e_iM\oplus e'M.$  Now ({\it i}) and ({\it ii}) follow from (i) and (ii) of Proposition \ref{pro2.2} and \cite[(2.9 b)Theorem 2.8]{D}, finally part ({\it iii}) is a consequence of (iii) in Proposition \ref{pro2.2} and  \cite[Proposition 47]{NyOP3}.
\end{proof}

Let  $S$  be an  epsilon-strongly  $G$-graded ring,  where  $S_g\neq 0,$ for all but a finite subset of $G$. By Corollary \ref{epcero} the set  $B(\E_G)^*$ is finite, suppose that  the set of minimal elements  $\{e_1,\cdots,e_k\}$ are epsilon-central.  If $e'\neq0,$ write  $S^{(1)}:=e'S$ and $1_{(1)}:=e'$. By  Proposition \ref{pro2.2}, the ring  $S^{(1)}$ is epsilon-strongly $G$-graded with $\epsilon_{g}^{(1)}:=e'\epsilon_g=1_{(1)}\epsilon_g, g\in G.$  
We apply to $S^{(1)}$  the same process, let  $\E_{G}^{(1)}:=\{\epsilon_g^{(1)}:g\in G\}.$ First  notice that  $|B(\E_{G}^{(1)})|<|B(\E_G)|$. Indeed, it is clear that the map  $B(\E_G)\ni \prod_{i=1}^n\epsilon_{g_i}\mapsto e'\prod_{i=1}^n\epsilon_{g_i}\in B(\E_G^{(1)})$ is surjective but non injective because for every $i=1,2,\cdots,k$, we have  $e'e_i=0$.

Let  $\gamma^{(1)}$ be the corresponding map of $S^{(1)}$ defined by \eqref{defgam}. Notice that  $r\in S^{(1)}$ is $\gamma^{(1)}$-invariant if and only if is $\gamma$-invariant. 
Consider  $\{e_1^{(1)},\cdots,e_{k_1}^{(1)}\}$ the set of  minimal elements in $B(\E_G^{(1)})$ and suppose they are epsilon-central, since they are orthogonal we can write 
    $S^{(1)}=\oplus_{i=1}^{k_1}e_i^{(1)}S^{(1)}\oplus 1_{(2)}S^{(1)},$
where $1_{(2)}:=1_{(1)}-\sum_{i=1}^{k_1}e_i^{(1)}$ 
and  $e_i^{(1)}S^{(1)}$ is strongly  $N(e_i^{(1)})$-graded for  $i=1,2,\cdots,k_1$.
 If $1_{(2)}=0$, then  $S$ is also strongly $N$-graded where 
   $$N=\bigcap_{i=1}^kN(e_i)\bigcap_{i=1}^{k_1}N(e_i^{(1)}),$$
and the process stops. 
On the other hand, if 
$1_{(2)}\neq 0$, then  $S^{(2)}:= 1_{(2)}S^{(1)}$ is epsilon-strongly graded and we repeat the process.  Again we have $|B(\E_G^{(2)})|<|B(\E_{G}^{(1)})|<|B(\E_G)|,$ and supposing that all minimal elements in  $B(\E_G^{(2)})$ are epsilon-central, we apply the same process to  $S^{(2)}.$ However, since the semigroup $B(\E_G)$ is finite, there exists some integer  $l$ such that $S^{(l)}=1_{(l)}S^{(l-1)}$ is epsilon-strongly  graded, where  for $g\in G$\begin{center}
    $\epsilon_g^{(l)}=1_{(l)}\epsilon_g^{(l-1)}=1_{(l)}1_{(l-1)}\epsilon_g^{(l-2)}=\cdots =1_{(l)}1_{(l-1)}\cdots 1_{(1)}\epsilon_g$.
\end{center}
Moreover,  if the identity of $S^{(l)}$  given by $1_{(l)}=1_{(l-1)}-\sum_{i=1}^{k_{l-1}}e_i^{(l-1)}$, being $\{e_1^{(l-1)},\cdots,e_{k_{l-1}}^{(l-1)}\}$ the set of epsilon-central elements of  $S^{(l-1)},$ is non-zero we must have $\epsilon_g^{(l)}=0,$ for  $g\neq e,$ that is the gradation of $S^{(l)}$ is the trivial one. 

\begin{teo}\label{teo2.1}
Let  $S$  be an  epsilon-strongly  $G$-graded ring,  where  $S_g\neq 0,$ for all but a finite subset of $G$. Following the notations above, if for each step  $j$, $1_{(j)}\neq0$ and the minimal elements in  $B(\E_G^{(j)})$ are epsilon-central, then $S$ is a direct sum of strongly-graded rings and a ring with induced trivial gradation. 
\end{teo}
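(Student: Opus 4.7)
The plan is to formalize the iterative procedure sketched in the paragraph immediately preceding the theorem. Since only finitely many components $S_g$ are non-zero, Corollary \ref{epcero} ensures that only finitely many $\epsilon_g$ are non-zero, and hence the boolean semigroup $B(\E_G)$ is finite. I would proceed by strong induction on $|B(\E_G)|$, with the base case (in which $B(\E_G)=\{0,1_S\}$, forcing $\epsilon_g=0$ for $g\neq e$ and hence, by Corollary \ref{epcero}, $S_g=\{0\}$ for $g\neq e$) giving $S$ itself as a trivially graded ring.

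For the inductive step, the hypothesis that all minimal elements $\{e_1,\ldots,e_k\}$ of $B(\E_G)^*$ are epsilon-central allows me to invoke Proposition \ref{pro2.2}, producing the splitting $S=\bigoplus_{i=1}^k e_iS\oplus e'S$, where each $e_iS$ is strongly $N(e_i)$-graded by Lemma \ref{lem2.2}(i). When $e'\neq 0$, the ring $S^{(1)}:=e'S$ is epsilon-strongly $G$-graded by Proposition \ref{pro2.2}(iii), and, as observed in the excerpt, $\gamma^{(1)}$-invariance coincides with $\gamma$-invariance on $S^{(1)}$, so the standing hypothesis propagates to $S^{(1)}$. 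The strict descent $|B(\E_G^{(1)})|<|B(\E_G)|$ follows from the argument in the excerpt: the natural map $B(\E_G)\ni b\mapsto e'b\in B(\E_G^{(1)})$ is surjective but fails to be injective since $e'e_i=0$ for every $i$. Hence the inductive hypothesis applies to $S^{(1)}$.

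The main obstacle is the exit condition. After finitely many iterations the descent must stabilize at some step $l$ where no further reduction is possible; at that stage I must verify that $S^{(l)}$ carries the induced trivial gradation whenever $1_{(l)}\neq 0$. Suppose, for contradiction, that some $\epsilon_g^{(l)}\neq 0$ with $g\neq e$ existed. Then $\epsilon_g^{(l)}\in B(\E_G^{(l)})^*$, so $B(\E_G^{(l)})^*$ would contain a minimal element lying below it which, by the standing hypothesis, is epsilon-central; a further application of Proposition \ref{pro2.2} would then produce an additional strictly descending step, contradicting the stabilization of $|B(\E_G^{(j)})|$ at step $l$. Therefore $\epsilon_g^{(l)}=0$ for all $g\neq e$, and Corollary \ref{epcero} yields $S_g^{(l)}=\{0\}$ for such $g$, so $S^{(l)}$ has trivial induced gradation.

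Unwinding the iteration yields $S=\bigoplus_{j,i}e_i^{(j)}S^{(j)}\oplus S^{(l)}$, a direct sum of strongly-graded rings (each strongly graded over its associated subgroup $N(e_i^{(j)})\leq G$) together with $S^{(l)}$ (either zero, or equipped with the induced trivial gradation), which is precisely the decomposition asserted by the theorem.
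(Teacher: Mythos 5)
Your proposal is correct and follows essentially the same route as the paper, whose ``proof'' of Theorem \ref{teo2.1} is precisely the iterative construction in the paragraphs preceding the statement: repeated application of Proposition \ref{pro2.2}, the strict descent $|B(\E_G^{(j+1)})|<|B(\E_G^{(j)})|$, and termination in either a zero remainder or a trivially graded remainder, which you merely repackage as an induction on $|B(\E_G)|$. The one quibble is your base case: $B(\E_G)=\{0,1_S\}$ does not force $\epsilon_g=0$ for $g\neq e$ (one could have $\epsilon_g=1_S$ for some $g\neq e$, in which case $S=1_S\cdot S$ is strongly $N(1_S)$-graded rather than trivially graded), but this case is absorbed by a direct application of Proposition \ref{pro2.2} with $e'=0$, so the argument is unaffected.
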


\subsection{Epsilon-crossed products as sums of crossed products}
Recall that $S$ is an epsilon-crossed product if for any $g\in G$  there is an epsilon-invertible element in $S_g,$ that is an element $s\in S_g$ for which there exists $t\in S_{g^{-1}}$ such that  $st=\epsilon_g$ and $ts=\epsilon_{g^{-1}}$. 
\begin{pro}\label{coro2.1}
Let  $S$  be an  epsilon-strongly  $G$-graded ring satisfying the assumptions of Theorem \ref{teo2.1}.  If $S$  is an epsilon-crossed product then  $S$ is a direct sum of  crossed products and a ring with trivial gradation.
\end{pro}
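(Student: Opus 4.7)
My plan is first to invoke Theorem \ref{teo2.1}, which under the stated hypotheses decomposes $S$ as
$$S \;=\; \bigoplus_{j,i} e_i^{(j)} S^{(j-1)} \;\oplus\; T,$$
where each $e_i^{(j)} S^{(j-1)}$ is strongly $N(e_i^{(j)})$-graded (produced at the $j$-th step of the iterative procedure, with $S^{(0)} := S$) and $T$ is the (possibly zero) summand with induced trivial gradation. Since $T$ is already in the required form, it remains to show that every strongly graded summand is a crossed product over its supporting subgroup.

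Fix such a summand and set $e := e_i^{(j)}$ and $N := N(e)$. By Proposition \ref{lem2.1}, $N$ is a subgroup of $G$ and $e \in Z(S)$, and by construction $e \epsilon_h = e$ for every $h \in N$. To conclude that $e S^{(j-1)}$ is a crossed product over $N$, it suffices to exhibit, for every $g \in N$, an invertible element of the component $(e S^{(j-1)})_g$. Since $S$ is an epsilon-crossed product by hypothesis, I can pick $s_g \in S_g$ and $t_{g^{-1}} \in S_{g^{-1}}$ with $s_g t_{g^{-1}} = \epsilon_g$ and $t_{g^{-1}} s_g = \epsilon_{g^{-1}}$; the natural candidate for an invertible element is $e s_g$, with candidate inverse $e t_{g^{-1}}$.

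Using centrality of $e$ together with $g, g^{-1} \in N$, the computation
$$(e s_g)(e t_{g^{-1}}) = e^2 s_g t_{g^{-1}} = e \epsilon_g = e, \qquad (e t_{g^{-1}})(e s_g) = e^2 t_{g^{-1}} s_g = e \epsilon_{g^{-1}} = e$$
shows that $e s_g$ is invertible in the ring $e S^{(j-1)}$, whose identity is $e$. A small bookkeeping check confirms that $e s_g$ lies in the correct component $(e S^{(j-1)})_g = e\, 1_{(j-1)} \cdots 1_{(1)} S_g$, which follows from the inequality $e \leq 1_{(j-1)} \cdots 1_{(1)}$ built into the construction of the successive stages. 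Summing over all strongly graded summands then expresses $S$ as a direct sum of crossed products together with the trivially graded piece $T$. I do not anticipate any real obstacle here: Theorem \ref{teo2.1} already does the heavy lifting by providing the decomposition and the subgroups $N(e_i^{(j)})$, and the epsilon-crossed-product hypothesis supplies exactly the invertible elements needed after multiplication by the central idempotent $e$.
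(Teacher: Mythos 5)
Your proposal is correct and follows essentially the same route as the paper: reduce to showing that for each epsilon-central idempotent $e$ the strongly graded summand $(eS)_{N(e)}$ contains a unit in each component $eS_g$, $g\in N(e)$, obtained by multiplying the epsilon-invertible pair $s\in S_g$, $t\in S_{g^{-1}}$ by the central idempotent $e$ and using $e\epsilon_g=e=e\epsilon_{g^{-1}}$. You spell out the iterative bookkeeping across the stages of Theorem \ref{teo2.1} slightly more explicitly than the paper does, but the key computation $(es)(et)=e\epsilon_g=e=(et)(es)$ is identical.
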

\begin{proof}
It is enough to show that for any  $e\in B(\E_G)^*$ an epsilon-central element, the ring  $(eS)_{N(e)}=\bigoplus_{g\in N(e)} eS_g$ is a crossed product. Take  $g\in N(e)$ we shall show that   $eS_g\cap U(eS)\neq \emptyset$. First notice that  $e\epsilon_g=e=e\epsilon_{g^{-1}},$ on the other hand there are  $s\in S_g$ and  $t\in S_{g^{-1}}$ such that  $st=\epsilon_g$ y $ts=\epsilon_{g^{-1}}$. Since  $e\in Z(S)$ we have 
    $(es)(et)=est=e\epsilon_g=e=e\epsilon_{g^{-1}}=ets=(et)(es)$.
and this shows $U(eS)\cap eS_g\neq \emptyset,$ as desired.
\end{proof}

\begin{rem} Let $S$ be the ring  defined in Example \ref{ejem2.2}, it is shown in \cite[Proposition 40]{NYOP} that $S$ is not an epsilon-crossed product, and this is done  by means of some straightforward but long calculations.   Now using  Proposition \ref{coro2.1} and  Dade's example, that is  the ring $\ep_1S$ in  Example  \ref{ejem2.2},  we obtain an easier way to show this result.
\end{rem}

We denote by   $G$-gr (resp.  $G$-stgr) the category of $G$-graded (resp. strongly $G$-graded) rings and graded morphisms and  $G$-$\epsilon_G$stgr the  subcategory of  $G$-gr whose objects are the epsilon-strongly $G$-graded rings $S=\oplus_{g\in G}S_g$  for which  $\epsilon_G:=\bigwedge\{\epsilon_g\mid g\in G\}$  belongs to  $B(\E_G)^*.$ It is clear that this element is $\gamma$-invariant and $N(\ep_G)=G.$ 
Notice that the ring in    Example \ref{ejem2.2} is an object in  $G$-$\epsilon_G$stgr.

\begin{pro} \label{fun} The following statements hold:
\begin{itemize}
\item[$(i)$] If $S$ is  an object in $G$-$\epsilon_G$stgr.  Then $\ep_GS=\bigoplus_{g\in G}\ep_GS_g$ is a strongly $G$-graded ring. 
\item [$(ii)$] If $S$ is  is  an object in $G$-$\epsilon_G$stgr and is an epsilon crossed product, then $\ep_GS$ is a crossed product.
\end{itemize}
\end{pro}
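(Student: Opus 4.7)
The plan is to reduce both statements to the general results about epsilon-central elements already proved in this section, by observing that $\epsilon_G$ fits perfectly into that framework. Since every non-zero element of $B(\E_G)$ is by definition a finite product of elements of $\E_G$ and $\epsilon_G$ is the meet of all such generators, $\epsilon_G$ is a minimal element of $B(\E_G)^*$. Combined with the already noted facts that $\epsilon_G$ is $\gamma$-invariant and $N(\epsilon_G)=G$, Proposition \ref{lem2.1} then tells us that $\epsilon_G$ is epsilon-central, which is the hypothesis needed to invoke the machinery of Lemma \ref{lem2.2} and Proposition \ref{coro2.1}.

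For part $(i)$, I would simply apply Lemma \ref{lem2.2}$(i)$ to $r=\epsilon_G$: this immediately yields that $\epsilon_G S = (\epsilon_G S)_{N(\epsilon_G)} = \bigoplus_{g\in G}\epsilon_G S_g$ is strongly $N(\epsilon_G)$-graded, and since $N(\epsilon_G)=G$ this is precisely the desired conclusion.

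For part $(ii)$, I would mimic the argument of Proposition \ref{coro2.1} and show that each homogeneous component $\epsilon_G S_g$ contains a unit of $\epsilon_G S$. Given $g\in G$, pick $s\in S_g$ and $t\in S_{g^{-1}}$ witnessing the epsilon-invertibility, so that $st=\epsilon_g$ and $ts=\epsilon_{g^{-1}}$. Since $\epsilon_G$ is central in $S$ (being $\gamma$-invariant) and $\epsilon_G\leq \epsilon_g,\epsilon_{g^{-1}}$ gives $\epsilon_G\epsilon_g=\epsilon_G=\epsilon_G\epsilon_{g^{-1}}$, one computes
$$(\epsilon_G s)(\epsilon_G t)=\epsilon_G\, st=\epsilon_G\epsilon_g=\epsilon_G \quad\text{and}\quad (\epsilon_G t)(\epsilon_G s)=\epsilon_G\, ts=\epsilon_G\epsilon_{g^{-1}}=\epsilon_G,$$
so $\epsilon_G s$ is a unit of $\epsilon_G S$ lying in $\epsilon_G S_g$. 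Hence every graded component of $\epsilon_G S$ meets the group of units, making $\epsilon_G S$ a crossed product.

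I do not anticipate a serious obstacle: both items are direct specializations of results already proved, the only point requiring a brief justification being the minimality of $\epsilon_G$ in $B(\E_G)^*$, which is forced by the way the semigroup $B(\E_G)$ is generated from $\E_G$ under multiplication.
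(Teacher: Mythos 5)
Your argument is correct and takes essentially the same route as the paper, whose proof of this proposition consists of citing Proposition \ref{lem2.1} (together with Lemma \ref{lem2.2}) for part $(i)$ and Proposition \ref{coro2.1} for part $(ii)$. The only content you add is the explicit verification that $\ep_G$, being the product of all the generators of $B(\E_G)$ and non-zero by the definition of the category $G$-$\epsilon_G$stgr, is the minimum of $B(\E_G)^*$ and hence epsilon-central --- a point the paper leaves implicit in the remark preceding the proposition.
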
 
\begin{proof} We have that 
$(i)$ is a consequence of  Proposition \ref{lem2.1} while $(ii)$  follows from Proposition \ref{coro2.1}.
\end{proof}

Our next goal is to show that  $G$-stgr  is reflective in  $G$-$\epsilon_G$stgr. For the reader's convenience we recall this concept here.

\begin{defi} Let $\mathcal{C}$ be a category and   $\mathcal{D}$ be full subcategory of $\mathcal{C}.$ We say that  
 $\mathcal{D}$  is reflective if the inclusion functor $i : \mathcal{D} \to \mathcal{C}$ has a left adjoint. 
\end{defi}

Now we give an auxiliary result.

\begin{pro}\label{carad}\cite[Definition 2.4.3]{P}, \cite[Theorem 2 (i), P 83]{ML}. Let $\mathcal{C}$  and $\mathcal{D}$ be two categories and $F\colon \mathcal{C}\to \mathcal{D},$ $G\colon \mathcal{D}\to \mathcal{C}$ two covariant functors. Then $(F,G)$ is an adjoint pair if there is a natural transformation $\eta\colon 1_{\mathcal{C}}\to GF$ such that for  object $X$ in $\mathcal{C}$ each  object $Y$ in $\mathcal{D}$ and any morphisms $f\colon X\to GY,$ there exists a unique $g\colon FX\to Y$ such that the following diagram commutes.
\[
  \begin{tikzcd}
    X \arrow{r}{\eta_X} \arrow[swap]{dr}{ f} & GFX \arrow{d}{G(g)} \\
     & GY
  \end{tikzcd}
\]
\end{pro}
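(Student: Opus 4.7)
The plan is to establish, for each pair of objects $(X,Y)\in \mathcal{C}\times \mathcal{D}$, a natural bijection
\[\Phi_{X,Y}\colon \mathrm{Hom}_{\mathcal{D}}(FX,Y)\longrightarrow \mathrm{Hom}_{\mathcal{C}}(X,GY),\qquad g\mapsto G(g)\circ \eta_X,\]
which is precisely what is required for $(F,G)$ to be an adjoint pair. The map is well-defined because $\eta_X\colon X\to GFX$ and $G(g)\colon GFX\to GY$ by functoriality of $G$.

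Bijectivity of $\Phi_{X,Y}$ is immediate from the stated universal property: for every $f\colon X\to GY$ there exists a unique $g\colon FX\to Y$ with $G(g)\circ \eta_X=f$, equivalently $\Phi_{X,Y}(g)=f$. Existence yields surjectivity and uniqueness yields injectivity, and the assignment $f\mapsto g$ is a two-sided inverse of $\Phi_{X,Y}$.

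Naturality in $Y$ follows directly from functoriality of $G$: given $h\colon Y\to Y'$ and $g\colon FX\to Y$,
\[\Phi_{X,Y'}(h\circ g)=G(h\circ g)\circ \eta_X=G(h)\circ G(g)\circ \eta_X=G(h)\circ \Phi_{X,Y}(g).\]
Naturality in $X$ is where naturality of $\eta$ enters: given $k\colon X'\to X$, the naturality square for $\eta$ reads $\eta_X\circ k=GF(k)\circ \eta_{X'}$, and so
\[\Phi_{X,Y}(g)\circ k=G(g)\circ \eta_X\circ k=G(g)\circ GF(k)\circ \eta_{X'}=G(g\circ F(k))\circ \eta_{X'}=\Phi_{X',Y}(g\circ F(k)).\]

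There is no substantive obstacle here: the content of the proposition is essentially a repackaging of the universal property of the unit $\eta$, so the verification collapses to functoriality of $G$ together with naturality of $\eta$. This is why the authors simply quote the result from the standard references rather than providing a detailed proof.
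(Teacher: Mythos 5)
Your proof is correct and complete: the paper itself gives no proof of this proposition, simply citing Pierce and Mac Lane, and your argument (defining $\Phi_{X,Y}(g)=G(g)\circ\eta_X$, extracting bijectivity from the existence--uniqueness clause, and checking naturality in each variable via functoriality of $G$ and naturality of $\eta$) is precisely the standard argument in the cited reference, Mac Lane's Theorem 2(i). Nothing is missing.
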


\begin{pro}\label{refle}  $G$-{\rm stgr}  is a  reflective subcategory of $G$-$\epsilon_G${\rm stgr}. 
\end{pro}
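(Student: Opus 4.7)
My plan is to invoke Proposition \ref{carad} by exhibiting an explicit left adjoint $F\colon G\text{-}\epsilon_G\text{stgr}\to G\text{-stgr}$ of the inclusion functor $i$. Motivated by Proposition \ref{fun}(i), I set $F(S):=\epsilon_GS$, which is strongly $G$-graded with identity $\epsilon_G$. On a morphism $\phi\colon S\to S'$ in $G\text{-}\epsilon_G\text{stgr}$ I define $F(\phi)\colon \epsilon_GS\to \epsilon'_GS'$ by $F(\phi)(\epsilon_Gs):=\epsilon'_G\phi(s)$. Checking that $F(\phi)$ is a well-defined unital graded ring homomorphism uses centrality of $\epsilon_G$ and $\epsilon'_G$ (Proposition \ref{lem2.1}) together with the structural inequality $\phi(\epsilon_g)\leq\epsilon'_g$, which holds because $\phi(\epsilon_g)\in\phi(S_gS_{g^{-1}})\subseteq S'_gS'_{g^{-1}}$ and $\epsilon'_g$ is the unit of $S'_gS'_{g^{-1}}$. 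Functoriality is then routine.

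For the unit of the adjunction I take $\eta_S\colon S\to \epsilon_GS$, $s\mapsto \epsilon_Gs$. Since $\epsilon_G$ is a central idempotent, $\eta_S$ is a unital graded ring homomorphism sending $1_S$ to the identity $\epsilon_G$ of $\epsilon_GS$, so it is a morphism in $G\text{-}\epsilon_G\text{stgr}$. Naturality of the family $\{\eta_S\}$ in $S$ is immediate from the definition of $F(\phi)$.

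To verify the universal property of Proposition \ref{carad}, the key observation is that $F\circ i$ is the identity functor on $G\text{-stgr}$: for $T\in G\text{-stgr}$, strong gradation forces $\epsilon'_g=1_T$ for every $g\in G$, so $\epsilon'_G=1_T$ and $F(T)=\epsilon'_GT=T$, while $\eta_{iT}=\mathrm{id}_T$. Given a morphism $f\colon S\to iT$ in $G\text{-}\epsilon_G\text{stgr}$, I can then take $g:=F(f)\colon \epsilon_GS\to T$; the required identity $i(g)\circ\eta_S=f$ is precisely the naturality square $F(f)\circ\eta_S=\eta_{iT}\circ f=f$, and uniqueness of $g$ follows because $\eta_S$ surjects onto $\epsilon_GS$. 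The main obstacle I anticipate is the bookkeeping required to check that $F$ is truly functorial into $G\text{-stgr}$, in particular that $F(\phi)$ is unital and well defined, since this hinges on how $\phi$ relates the distinguished idempotents $\epsilon_G$ and $\epsilon'_G$ in the two Boolean semigroups.
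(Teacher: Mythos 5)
Your construction coincides with the paper's: the reflector $F(S)=\epsilon_GS$, the unit $\eta_S=$ multiplication by $\epsilon_G$, the universal arrow obtained from $f$ by cutting down to $\epsilon_GS$, and the appeal to Proposition \ref{carad}. Your additional checks (that $F\circ i$ is the identity on $G$-{\rm stgr} because strong gradation forces $\epsilon'_g=1_T$, and that uniqueness follows from surjectivity of $\eta_S$) are correct and make the verification more complete than the one printed in the paper.

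There is, however, one step whose justification points in the wrong direction, and it is precisely the step you single out as the main obstacle. The inequality you establish, $\phi(\epsilon_g)\le\epsilon'_g$ (hence $\phi(\epsilon_G)\le\epsilon'_G$), is not what well-definedness of $F(\phi)$ requires: to pass from $\epsilon_Gx=0$ to $\epsilon'_G\phi(x)=0$ you need the reverse inequality $\epsilon'_G\le\phi(\epsilon_G)$. Indeed, taking $x=1_S-\epsilon_G$ shows that your $F(\phi)$ is well defined if and only if $\epsilon'_G\phi(\epsilon_G)=\epsilon'_G$, i.e.\ (combined with your inequality) $\phi(\epsilon_G)=\epsilon'_G$. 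The same equality is what the triangle identity $i(g)\circ\eta_S=f$ needs: for $T$ strongly graded one has $\epsilon'_G=1_T$, so one must know $f(\epsilon_G)=1_T$, equivalently that $f$ annihilates $(1_S-\epsilon_G)S$, and this does not follow from $f$ being a unital graded ring homomorphism (one can map the ring of Example \ref{ejem2.2} into the product of $\epsilon_1S$ with a strongly graded ring receiving $(1_S-\epsilon_1)S$ unitally in degree $e$). So the argument only goes through under the convention that morphisms in $G$-$\epsilon_G${\rm stgr} carry $\epsilon_g$ to $\epsilon'_g$, or at least $\epsilon_G$ to $\epsilon'_G$. To be fair, the paper's own proof has exactly the same gap --- it sets $F(f)=f_{\mid \epsilon_GS}$ and asserts $g\circ\eta_S=f$ without verifying $f(\epsilon_G)=\epsilon'_G$ --- so you have reproduced the paper's argument together with its unstated hypothesis; but the compatibility you actually cite is the opposite of the one the proof consumes, and you should either add the missing hypothesis to the definition of the category or prove the equality $\phi(\epsilon_G)=\epsilon'_G$.
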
 
\begin{proof}  By (i) of Proposition  \ref{fun} there is a functor  $F:G-\epsilon_G{\rm stgr}\to G-{\rm stgr} $ for which $F(S)=\ep_GS$ and $F(f)=f_{\mid \ep_GS}$ for any  morphism $f\colon S\to S'$ in  $G$-$\epsilon_G{\rm stgr}.$ We shall show that $(i, F)$ is an adjoint pair, where $i:  G-{\rm stgr} \to G-\epsilon_G{\rm stgr}$ is the inclusion functor. For this, let $S$ be  an object  in $G$-$\epsilon_G$stgr let $\eta_S\colon S\to \ep_GS$ be the multiplication by $\ep_G.$ Then it is clear that the diagram
\[
\begin{tikzcd}
S \arrow[r,"\eta_{S}"] \arrow[d,swap,"f"] &
  \ep_G M\arrow[d,"F(f)"] \\
S' \arrow[r,"\eta_{S'}"] &  \ep_GS'
\end{tikzcd}
\]
 commutes, and thus $\eta\colon {1}_{G-\epsilon_G{\rm stgr}}\to iF $ is a natural transformation. Now let  $S$ in $G$-$\epsilon_G{\rm stgr}$ and  $S'$ in   $G$-{\rm stgr} and $f\colon S\to S' $ a morphism in  $G$-$\epsilon_G{\rm stgr}$,  then $g\colon \ep_GS\to S'$ defined  by $g=f_{\mid \ep_GS}$ is the only morphism in   such that $G$-{\rm stgr} $\eta_S\circ g=f,$ then by Proposition  \ref{carad} we conclude that $(i, F)$ is an adjoint pair, which ends the proof.
\end{proof}

Let $G$-{\rm crs}  and   $G$-$\epsilon_G${\rm crs}. be the subcategories of  $G$-{\rm stgr}  and f $G$-$\epsilon_G${\rm stgr}, whose objects are crossed products and epsilon-crossed products respectively. Using (ii) of Proposition \ref{fun} and proceeding as in the proof of  Proposition \ref{refle} one can show that  $G$-{\rm crs}  is reflective in  $G$-$\epsilon_G${\rm crs} .

\section{Examples: Leavitt Path Algebras}
The class of Leavitt Path Algebras has become an  important source of examples   in the study of epsilon and  nearly epsilon-strongly graded rings. In this final section we illustrate some of our results using this family.

A directed graph $E=(E^0,E^1,r,s)$ consists of two countable sets $E^0$, $E^1$ and maps $r,s : E^1 \to E^0$. The elements of $E^0$ are called \emph{vertices} and the elements of $E^1$ are called \emph{edges}.  For any vertex $v$ we set $s(v)=r(v)=v.$

\begin{defi} \cite{AAP}
Let $E$ be any directed graph and let $R$ be a commutative ring.
The \emph{Leavitt path $R$-algebra $L_R(E)$ of $E$ with coefficients in $R$} is the
$R$-algebra generated by a set $\{v \mid v\in E^0\}$ of pairwise orthogonal idempotents, together with a set of elements $\{f \mid f\in E^1\} \cup
\{f^* \mid f\in E^1\}$, which satisfy the following  relations:
\begin{enumerate}
\item[$(i)$] $s(f)f=fr(f)=f$, for all $f\in E^1$;
\item[$(ii)$] $r(f)f^*=f^*s(f)=f^*$, for all $f\in E^1$;
\item[$(iii)$] $f^*f'=\delta_{f,f'}r(f)$, for all $f,f'\in E^1$;
\item[$(iv)$] $v=\sum\limits_{ \{ f\in E^1 \mid s(f)=v \} } ff^*$, for every $v\in E^0$ for which $s^{-1}(v)$ is non-empty and finite.
\end{enumerate}
\end{defi}

We have the next. 
\begin{pro}  Assume that  the Leavitt Path Algebra $L_{R}(E)$  is endowed with a standard $G$-gradation  (see  \cite[p. 8]{NYO}), and denote by   $X$ the set of formal expressions $\alpha\beta^*,$ where $\alpha, \beta  \in  W.$ Then the  following assertions are equivalent.
\begin{enumerate}
\item[$(i)$] $L_{R}(E)$ is epsilon-strongly graded;
\item[$(ii)$]  For $g\in G,$ the set $L_{R}(E)_g=span_R\{x\in X\mid \partial(x)=g \}$ is finitely generated as a $R$-module.
\end{enumerate}
\end{pro}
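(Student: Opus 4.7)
The plan is to derive both implications from Theorem \ref{neie}, together with the fact recorded earlier in the paper that every Leavitt path algebra $L_R(E)$ endowed with a standard $G$-gradation is nearly epsilon-strongly graded (this is \cite[Theorem 4.2]{NYO}). With that fact in hand, Theorem \ref{neie} reduces the epsilon-strongly graded property for $L_R(E)$ to a purely finiteness condition on each homogeneous component.

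For the direction $(ii)\Rightarrow(i)$, I would assume that each $L_R(E)_g = \mathrm{span}_R\{x\in X\mid \partial(x)=g\}$ is finitely generated as a module over the coefficient ring $R$. Since the coefficient ring $R$ sits naturally inside the principal component $L_R(E)_e$, any finite $R$-generating set of $L_R(E)_g$ is a fortiori a finite left $L_R(E)_e$-generating set. Combined with the fact that $L_R(E)$ is nearly epsilon-strongly graded, Theorem \ref{neie} immediately yields that $L_R(E)$ is epsilon-strongly graded.

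For the direction $(i)\Rightarrow(ii)$, I would start from an epsilon-idempotent $\epsilon_g=\sum_{i=1}^{n_g}u_g^{(i)}v_{g^{-1}}^{(i)}\in L_R(E)_g L_R(E)_{g^{-1}}$ satisfying $\epsilon_g s=s$ for all $s\in L_R(E)_g$ (Proposition \ref{epsilon1}(iii)). Expanding each $u_g^{(i)}$ as a finite $R$-linear combination of monomials $\alpha\beta^*\in X$ with $\partial(\alpha\beta^*)=g$, one obtains a finite set $M_g\subseteq X$ of monomials. The relation $s=\epsilon_g s$ for every monomial $s$ of degree $g$, together with the Cuntz--Krieger relations (iii)--(iv), forces products of the form $u_g^{(i)}v_{g^{-1}}^{(i)}s$ to collapse onto $R$-linear combinations of a finite subset of monomials, so that $M_g$ generates $L_R(E)_g$ over $R$.

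The main obstacle I foresee is precisely this last step: in a general standard-graded $L_R(E)$ there are arbitrarily many monomials in $L_R(E)_e$, and multiplying a monomial of grade $e$ by one of grade $g$ does not automatically preserve membership in the finite set spanned by $M_g$. The argument hinges on carefully applying the relations $f^*f'=\delta_{f,f'}r(f)$ and $v=\sum_{s(f)=v}ff^*$ at each vertex, together with the explicit shape of $\epsilon_g$ provided by \cite{NYO}, to show that only finitely many monomials can appear after reduction. Assuming this reduction goes through as in the Leavitt path algebra literature, the proof closes by Theorem \ref{neie}.
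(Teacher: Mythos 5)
Your treatment of $(ii)\Rightarrow(i)$ is essentially the paper's argument --- invoke \cite[Theorem 4.2]{NYO} to get that $L_R(E)$ is nearly epsilon-strongly graded and then apply Theorem \ref{neie} --- and the observation that a finite generating set over the coefficient ring is a fortiori one over the principal component $L_R(E)_e$ is correct. But you skip a step the paper makes explicitly: Theorem \ref{neie} is stated under the standing convention that the graded ring is unital, and epsilon-strongly graded rings are unital by definition, so before invoking it one must note that finite generation of $L_R(E)_e$ over the commutative coefficient ring forces $E^0$ to be finite (the vertices form an $R$-linearly independent family of orthogonal idempotents in the degree-$e$ component), whence $L_R(E)$ is unital with unit $\sum_{v\in E^0}v$. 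Your phrase ``$R$ sits naturally inside the principal component'' presupposes this unitality rather than establishing it.

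The more serious issue is $(i)\Rightarrow(ii)$: you do not prove it. The passage ``assuming this reduction goes through as in the Leavitt path algebra literature'' occurs exactly at the step where a proof is required, so this direction is an acknowledged gap --- and the obstacle you sense is real. The paper disposes of this implication in one line because what epsilon-strongness hands you for free (Proposition \ref{epsilon1}$(iv)$, equivalently Theorem \ref{neie}) is that each $L_R(E)_g$ is finitely generated over the principal component $L_R(E)_e$, not over the coefficient ring; read that way the implication is immediate and no manipulation of the Cuntz--Krieger relations is needed. If instead one insists on finite generation over the coefficient ring (as the notation $\mathrm{span}_R$ suggests), the monomial-collapsing strategy cannot succeed in general: for the graph with one vertex and one loop carrying the trivial standard grading, $L_R(E)\cong R[x,x^{-1}]$ is epsilon-strongly graded while its degree-$e$ component is not finitely generated over $R$. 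So the fix is not to push harder on the reductions via the relations $f^*f'=\delta_{f,f'}r(f)$ and $v=\sum_{s(f)=v}ff^*$, but to derive $(ii)$ directly from Proposition \ref{epsilon1}$(iv)$ with the finite generation understood over $L_R(E)_e$.
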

\begin{proof}
	 The part $(i)\Rightarrow(ii)$  is clear.  Conversely, for $(ii)\Rightarrow(i).$ The fact that $L_{R}(E)_g=span_R\{x\in \tilde X\mid \partial(x)=g \}$ is finitely generated as a $R$-module for all $g\in G$  implies that $E^0$ is finite, hence $L_{R}(E)$ is unital with unity $\sum\limits_{v\in E^0}v,$ whence  by  \cite[Theorem 4.2]{NYO}  and   Theorem \ref{neie} we conclude $L_{R}(E)$ is epsilon-strongly graded.
\end{proof}
We illustrate  Proposition  \ref{lem2.1} and  Lemma \ref{lem2.2} with the following.

\begin{ejem}
Let  $E$ be the directed graph:
\begin{center}
    $\xymatrix{
    \bullet_{v_1}\ar[r]^-{f}& \bullet_{v_2}& \bullet_{v_3} \ar[l]_-{g} & \bullet_{v_4}
    }$
\end{center}
we consider the Leavitt Path Algebra $L_{R}(E)$ with coefficients in  a commutative ring $R$  and  define a  standard $\mathbb{Z}_4$-gradation on $L_{R}(E)$ as follows:   $\partial(v_1)=\partial(v_2)=\partial(v_3)=\partial(v_4)=0$, $\partial(f)=\partial(g)=2$. We set  \begin{enumerate}
    \item [$(i)$] $S_{0}=span_R\{v_1,v_2,v_3,v_4,fg^*,gf^*\}$  with $\epsilon_{0}=v_1+v_2+v_3+v_4$.
    \item [$(ii)$] $S_{1}=S_3=\{0\},$  $\epsilon_{1}=\epsilon_{3}=0$.
    \item [$(iii)$] $S_{2}=span_R\{f,f^*,g^*,g\}$ and  $\epsilon_{2}=v_1+v_2+v_3$.

\end{enumerate}
Then $B(\E_{\mathbb{Z}_4})=\{\epsilon_{0},\epsilon_{2},0\}$,  and $\epsilon_{2}$ is the minimum of  $B(\E_{\mathbb{Z}_4})^{\ast}$. Moreover, $N(\epsilon_{2})=\{0, 2\}\simeq \Z_2.$ Then by Proposition  \ref{lem2.1} and  Lemma \ref{lem2.2}, we have that  $(v_1+v_2+v_3)S$ is strongly  $\mathbb{Z}_2$-graded. 
\end{ejem}

We give an example where Theorem \ref{teo2.1} can be  applied.
\begin{ejem}Consider the Leavitt path algebra $S$ associated  to the directed graph  
\begin{center}
     $E:=\xymatrix{
    \bullet_{v_1} \ar@(ul,dl)_-{h} & \bullet_{v_2} \ar@/^/[r]^-{f}& \bullet_{v_3} \ar@/^{5mm}/[l] ^-{g}& \bullet_{v_4}
    }$
\end{center}
We give to $S=L_R(E)$ a  standard  $\mathbb{Z}_8$-gradation, where $\partial(f)=\partial(g)=2,$   $deg(h)=4$ and   \begin{enumerate}
    \item [$(i)$] $S_{2}=span_R\{f,g,fgfgf,fgfgfgfgf,\cdots\},$  $\ep_{2}=v_2+v_3.$
    \item [$(ii)$] $S_{4}=span_R\{h,h^*,fg,g^*f^*\cdots\},$  $\ep_{4}=v_1+v_2+v_3.$
    \item [$(iii)$] $S_{6}=span_R\{g^*,f^*,\cdots\},$  $\ep_{6}=v_2+v_3$ and 
\item [$(iv)$] $S_1=S_3=S_5=S_7=\{0\}.$
\end{enumerate}
We have $B(\E_{\mathbb{Z}_8})^*=\{\ep_2, \ep_4\}$ then $ \ep_2$  is minimal and   $N(\ep_2)=\langle 2\rangle\leq \mathbb{Z}_8.$ Then $\ep_2S$ is strongly  $N(\ep_2)$-graded. On the other hand,  $1-\ep_2=v_1+v_4$, and  $S^{(1)}=(v_1+v_4)S$ is epsilon-strongly $\mathbb{Z}_8$-graded. Moreover, $B(\E_{\mathbb{Z}_8}^{(1)})^*=\{v_1+v_4,v_1\} $ and $N(v_1)=\langle 4\rangle.$  Then $v_1S^{(1)}=v_1S$ is strongly $N(v_1)$-graded and we have $$S=(v_2+v_3)S\oplus v_1S \oplus v_4S,$$ where $v_4S$ has trivial gradation. Referring to (i) of Proposition \ref{pro2.2}, notice that $(v_2+v_3)S=v_2S\oplus v_3S$ but neither $v_2$ nor $v_3$ belong to  the boolean semigroup $B(\E_{\mathbb{Z}_8})^*.$
\end{ejem}

\end{document}